\documentclass[12pt,a4paper,final]{amsart}
\usepackage{amssymb}
\usepackage{hyperref}
\usepackage[notcite, notref]{showkeys}
\usepackage[utf8]{inputenc}

\theoremstyle{plain}
\newtheorem{theorem}{Theorem}[section]
\newtheorem{proposition}[theorem]{Proposition}
\newtheorem{corollary}[theorem]{Corollary}
\newtheorem{lemma}[theorem]{Lemma}
\newtheorem{claim}{Claim}

\theoremstyle{definition}

\newtheorem{remark}[theorem]{Remark}
\newtheorem{question}[theorem]{Question}

\newtheorem{algorithm}[theorem]{Algorithm}

\theoremstyle{remark}

\numberwithin{equation}{section}

\newcommand{\N}{\mathbb N}
\newcommand{\Z}{\mathbb Z}

\newcommand{\R}{\mathbb R}
\newcommand{\C}{\mathbb C}
\newcommand{\F}{\mathbb F}

\DeclareMathOperator{\GL}{GL}
\DeclareMathOperator{\SL}{SL}
\DeclareMathOperator{\Ot}{O}

\newcommand{\op}{\operatorname}
\newcommand{\Id}{\textup{Id}}

\newcommand{\mi}{\mathrm{i}}

\DeclareMathOperator{\Spec}{Spec}
\DeclareMathOperator{\Eig}{Eig}

\DeclareMathOperator{\Aut}{Aut}

\DeclareMathOperator{\vol}{vol}

\title
[The fundamental group]{The fundamental group of a spherical space form is not audible}

\author{Mauro Colantonio}
\address{Instituto de Matemática (INMABB), Departamento de Matemática, Universidad Nacional del Sur (UNS)-CONICET, Bahía Blanca, Argentina.}
\email{mauro.colantonio@uns.edu.ar}

\author{Emilio~A.~Lauret}
\address{Instituto de Matemática (INMABB), Departamento de Matemática, Universidad Nacional del Sur (UNS)-CONICET, Bahía Blanca, Argentina.}
\email{emilio.lauret@uns.edu.ar}

\subjclass[2020]{58J53, 58J50.}
\keywords{Spherical space forms, Type I groups, isospectral, strongly isospectral, Laplace spectrum, almost conjugate subgroups, audibility.}
\thanks{The authors were supported by grants from SGCYT--UNS (PGI 24/L126) and FONCyT (PICT-2019-01054).}
\date{\today}

\begin{document}

\begin{abstract}
We revisit the problem of isospectral spherical space forms with non-cyclic fundamental groups after the works by Ikeda, Gilkey and Wolf. 
We find the first pair of spherical space forms with non-isomorphic fundamental groups and the same Laplace spectrum. 
This shows that the isomorphism class of the fundamental group is not audible among spherical space forms. 
We also found several instances where one can hear the fundamental group of a spherical space form (among spherical space forms). 
\end{abstract} 

\maketitle

	
\section{Introduction}

Every Riemannian manifold $(M,g)$ has an associated distinguished differential operator, the Laplace--Beltrami operator $\Delta_g$.
When $M$ is compact, the spectrum $\Spec(M,g)$ of $\Delta_g$ (that is, the collection of eigenvalues of $\Delta_g$ counted with multiplicities) is real, non-negative, and discrete.
In particular, each eigenvalue has finite multiplicity.

Two compact Riemannian manifolds $(M,g)$ and $(M',g')$ are said to be \emph{isospectral} if their spectra coincide, that is, if $\Spec(M,g) = \Spec(M',g')$.
Following the evocative title of Kac's article~\cite{Kac66}, a geometric or topological property of $(M,g)$ is said to be \emph{audible} (also called a \emph{spectral invariant}) if it is determined by $\Spec(M,g)$.
For instance, the dimension, the volume, and the total scalar curvature are audible features.
This means that two isospectral manifolds have the same dimension, volume, and total scalar curvature.

In this article we are interested in audible properties of \emph{spherical space forms}, that is, manifolds of the form $S^d/G$, where $G$ is a subgroup of $\Ot(d+1)$ acting freely on the unit sphere $S^d$, endowed with constant positive sectional curvature.
The group $G$ is always finite and isomorphic to the fundamental group of $S^d/G$.

Given two isospectral spherical space forms $S^{d_1}/G_1$ and $S^{d_2}/G_2$, the spectral invariants mentioned above imply that
$
d := d_1 = d_2,
$
and that $G_1$ and $G_2$ have the same cardinality, since
$$
\frac{\vol(S^d)}{|G_1|}
= \vol(S^d/G_1)
= \vol(S^d/G_2)
= \frac{\vol(S^d)}{|G_2|}.
$$
Moreover, both manifolds have the same constant positive sectional curvature, which we assume to be equal to one from now on without loss of generality.

\begin{remark}
We emphasize that we are looking for audible properties of spherical space forms within the class of spherical space forms. 
For instance, the article \cite{LinSchmidtSuttonII} by Lin, Schmidt and Sutton studies isospectrality among locally homogeneous metrics on arbitrary elliptic $3$-manifolds. 
\end{remark}

In Wolf's classification of spherical space forms (see~\cite[Ch.~7]{Wolf-book}), the possible fundamental groups fall into six disjoint types, namely Types~I--VI.
Those of Type~I (i.e.\ all their Sylow subgroups are cyclic) are the most abundant.
For instance, the fundamental group of a spherical space form of dimension congruent to $1 \pmod 4$ is necessarily of Type~I.

Ikeda~\cite{Ikeda80_isosp-lens} constructed the first examples of non-isometric isospectral spherical space forms.
They consisted of families of lens spaces (i.e.\ their fundamental groups are cyclic, and hence of Type~I).
Many new examples of isospectral lens spaces have appeared since then, exhibiting different features.
We refer the reader to the survey article~\cite{LMR-SaoPaulo} for further details.

Ikeda~\cite{Ikeda83} also constructed non-isometric isospectral spherical space forms with non-cyclic fundamental groups. 
Roughly speaking, he proved that if $\Gamma$ is a non-cyclic Type I group, which is fixed point free (i.e.\ it has a real representation $\rho:\Gamma\to\Ot(d+1)\subset\GL(d+1)$ such that $\rho(\Gamma)$ acts freely on $S^{d}$), then all spherical space forms with fundamental group isomorphic to $\Gamma$ and minimal dimension are pairwise isospectral. 

\begin{theorem}[\cite{Ikeda83}] \label{thm1:Ikeda}
Let $\Gamma$ be a non-cyclic fixed point free Type I group. 
Given two irreducible real representations $\rho_1,\rho_2$ of $\Gamma$ such that $\rho_1(\Gamma)$ and $\rho_2(\Gamma)$ act freely on the unit sphere $S^d$ (they have necessarily the same dimension $d+1$), then $\Spec(S^d/\rho_1(\Gamma)) =\Spec(S^d/\rho_2(\Gamma))$.
\end{theorem}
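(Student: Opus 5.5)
The plan is to reduce isospectrality to a statement about characters and then show that all free irreducible representations of $\Gamma$ have the same character up to a Galois conjugation or group automorphism that preserves the spectral data.

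\emph{Step 1: spectrum as a character computation.} For a finite group $G\subset\Ot(d+1)$ acting freely on $S^d$, the eigenspaces of $\Delta$ on $S^d$ are the spaces $\mathcal H_k$ of harmonic homogeneous polynomials of degree $k$. The multiplicity of the eigenvalue $k(k+d-1)$ on $S^d/G$ is therefore $\dim\mathcal H_k^G$. By Ikeda's generating function,
\[
F_G(z)=\sum_{k\ge0}\dim\mathcal H_k^G\,z^k=\frac{1}{|G|}\sum_{g\in G}\frac{1-z^2}{\det(1-zg)} .
\]
Hence $\Spec(S^d/\rho_1(\Gamma))=\Spec(S^d/\rho_2(\Gamma))$ holds if and only if
\[
\sum_{\gamma\in\Gamma}\det(1-z\rho_1(\gamma))^{-1}=\sum_{\gamma\in\Gamma}\det(1-z\rho_2(\gamma))^{-1}.
\]
It suffices to find a bijection $\sigma$ of $\Gamma$ such that $\rho_2(\sigma(\gamma))$ and $\rho_1(\gamma)$ have the same eigenvalue multiset for every $\gamma\in\Gamma$.

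\emph{Step 2: structure of $\Gamma$ and its free representations.} A Type I group has the Zassenhaus presentation $\Gamma=\langle A,B : A^m=B^n=1,\ BAB^{-1}=A^r\rangle$ with $\gcd(n(r-1),m)=1$ and $r^{d'}\equiv1 \pmod m$, where $d'$ is the order of $r$ modulo $m$. Wolf's description of the fixed point free irreducible complex representations gives
\[
\pi_{k,l}=\operatorname{Ind}_{\langle A,B^{d'}\rangle}^{\Gamma}\chi_{k,l},\qquad \chi_{k,l}(A)=\zeta_m^{k},\quad \chi_{k,l}(B^{d'})=\zeta_{n}^{l d'},
\]
with $\gcd(k,m)=\gcd(l,n)=1$. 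All of them have dimension $d'$. The irreducible real free representations are the realifications or real forms of these, so they all have the same dimension $d+1$, as the statement claims. Every such $\pi_{k,l}$ is a Galois conjugate of $\pi_{1,1}$: the parameters $k,l$ correspond to units $a\in(\Z/|\Gamma|\Z)^\times$ acting on character values, since $\chi_{k,l}$ takes values in $\Q(\zeta_{mn})$.

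\emph{Step 3: Galois conjugation is realized by a power map on $\Gamma$.} Suppose $\rho_2=\rho_1^{\tau}$ with $\tau\in\operatorname{Gal}(\Q(\zeta_{|\Gamma|})/\Q)$ given by $\zeta\mapsto\zeta^a$ and $\gcd(a,|\Gamma|)=1$. Then $\rho_2(\gamma)$ has the same eigenvalues as $\rho_1(\gamma^a)$. The map $\gamma\mapsto\gamma^a$ is a bijection of $\Gamma$, because $a$ is coprime to the order of every element. Substituting it into the sum in Step 1 gives the required equality.

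The main obstacle is Step 2: verifying carefully that every pair of free irreducible real representations is related by a Galois automorphism. In the Type I setting this should follow from Wolf's explicit classification, by checking that the conditions on $(k,l)$ form exactly one orbit under $(\Z/mn\Z)^\times$. One must also confirm that passing to real representations does not break this, which holds because realification commutes with Galois action up to complex conjugation. Non-cyclicity is used only to guarantee that the free representations are these induced ones rather than characters of a cyclic group, for which distinct lens spaces generally fail to be isospectral.
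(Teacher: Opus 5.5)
The paper does not prove this statement; it only cites \cite{Ikeda83}, so there is no in-paper proof to compare with. Your argument is correct. It is the Galois-conjugation argument that underlies \cite{Gilkey85} and Wolf's group $\mathcal A(d,\Gamma)$ in \cite{Wolf01}. The core mechanism is sound:
\begin{itemize}
\item $\tau_a(\chi(\gamma))=\chi(\gamma^a)$ for every character $\chi$.
\item Hence $\rho_2$ restricted to $\langle\gamma\rangle$ and $\rho_1\circ(\,\cdot\,)^a$ restricted to $\langle\gamma\rangle$ have the same character, which gives $\Eig(\rho_2(\gamma))=\Eig(\rho_1(\gamma^a))$.
\item Since $\gcd(a,|\Gamma|)=1$, the map $\gamma\mapsto\gamma^a$ is a bijection of $\Gamma$.
\end{itemize}

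The step you call the main obstacle is in fact immediate, but you should write it out, because it is the only place where the Type~I structure enters. Read ``acts freely'' as ``fixed point free, hence faithful''; otherwise the dimension claim fails. Then Theorem~\ref{thm:irreps} gives $\rho_1\simeq\rho_{k,l}$ and $\rho_2\simeq\rho_{k',l'}$ with $\gcd(kk',m)=1=\gcd(ll',n)$.
\begin{itemize}
\item Since $\gcd(m,n)=1$, the Chinese remainder theorem gives $a$ with $a\equiv k'k^{-1}\pmod m$ and $a\equiv l'l^{-1}\pmod n$. In particular $\gcd(a,mn)=1$.
\item Apply $\zeta_{mn}\mapsto\zeta_{mn}^a$ entrywise to Wolf's matrices for $\pi_{k,l}$, whose entries are $0$, $1$, $\xi_m^{kr^j}$ and $\xi_{n/d'}^{l}$. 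The result is exactly $\pi_{k',l'}$.
\item Equivalently, $\chi_{k,l}^{\tau_a}=\chi_{k',l'}$ on $\langle A,B^{d'}\rangle$, and induction commutes with $\tau_a$.
\item Because $\tau_a$ commutes with complex conjugation, $\chi_{\rho_2}=\tau_a\circ\chi_{\rho_1}$. This holds whether the real representation is the realification or a real form, and all $\pi_{k,l}$ share the same Frobenius--Schur indicator.
\end{itemize}

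Two further remarks.

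First, once you have $\Eig(\rho_2(\gamma))=\Eig(\rho_1(\gamma^a))$, you can skip the generating function. Apply Proposition~\ref{prop:almost-conjugate} to the bijection $\phi(\rho_1(g))=\rho_2(g^{a^*})$, where $aa^*\equiv 1\pmod{mn}$. This gives strong isospectrality, Gilkey's strengthening, at no extra cost. It also shows why the paper's own isospectrality proof (Theorem~\ref{thm:contraejemplos}) has to rely on Ikeda's explicit eigenvalue polynomials: there the two groups are non-isomorphic, so no power map is available.

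Second, your closing sentence about non-cyclicity is inaccurate. For cyclic $\Gamma$ of order at least $3$, the free irreducible real representations are plane rotations. These are all Galois conjugate, the quotients are isometric circles, and your argument applies verbatim. Non-isospectral lens spaces arise only from reducible representations. Your proof never actually uses non-cyclicity, beyond the condition $|\Gamma|\geq 3$ needed in Theorem~\ref{thm:irreps}.
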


As a consequence, Ikeda obtained in \cite[Thm.~4]{Ikeda83} that there are isospectral and non-isometric spherical space forms in every odd dimension $\geq5$.
(The proof of the necessary result \cite[Lem.~2.5]{Ikeda83} is corrected in \cite[Lem.~1]{Ikeda97}.)

A few years later, Gilkey~\cite{Gilkey85} proved that the same examples in \cite{Ikeda83} are \emph{strongly isospectral}, that is, any natural operator acting on the same natural bundle on both manifolds have the same spectra. 
Strongly isospectral manifolds are isospectral with respect to the Hodge-Laplace operators on $p$-forms (called \emph{$p$-isospectral}) for all $p$. 
The converse is not true due to the existence of pairs of non-isometric lens spaces $p$-isospectral for all $p$ but not strongly isospectral constructed in \cite{LMR-onenorm}. 

\begin{remark}\label{rem:stronglyisospectral}
All the examples of isospectral spherical space forms with non-cyclic fundamental groups known so far, including those in this article, are strongly isospectral. 
\end{remark}

Later, Wolf~\cite{Wolf01} extended the construction in  Theorem~\ref{thm1:Ikeda} to any fixed point free group $\Gamma$. 
Roughly speaking, he defined a group of transformations $\mathcal A(d,\Gamma)$ on the finite set $\mathcal S(d,\Gamma)$ of all isometry classes of $d$-dimensional spherical space forms with fundamental group isomorphic to $\Gamma$ such that two elements in the same $\mathcal A(d,\Gamma)$-orbit are isospectral. 
Moreover, if $d$ is minimal for $\Gamma$, the action is transitive, so any two spaces in $\mathcal S(d,\Gamma)$ are isospectral. 
The precise definition of $\mathcal A(d,\Gamma)$ is very technical, and it depends deeply on the classification of spherical space forms (see \cite[Def.~6.1]{Wolf01}). 

The results mentioned above from~\cite{Ikeda83}, \cite{Gilkey85}, and~\cite{Wolf01} are, so far, the only known examples of isospectral spherical space forms with non-cyclic fundamental groups.
Because of their construction, any such isospectral pair has isomorphic fundamental groups.
Moreover, two isospectral lens spaces obviously have isomorphic fundamental groups, and a lens space cannot be isospectral to a spherical space form with non-cyclic fundamental group.
These facts lead us to ask whether the fundamental group is audible within the class of spherical space forms.

\begin{question}\label{question}
	Are there isospectral spherical space forms with non-isomorphic fundamental groups? 
\end{question}

As a partial negative answer, Ikeda~\cite[Thm.~3.1]{Ikeda80_3-dimII} proved that, for any odd prime $d$, two isospectral spherical space forms of dimension $2d-1$ must have isomorphic fundamental groups.
This result, combined with Theorem~\ref{thm1:Ikeda}, explains all possible isospectralities among spherical space forms with non-cyclic fundamental groups in these dimensions.
He also proved that there are no isospectral non-isometric spherical space forms in dimension~$3$ (see~\cite[Thm.~1]{Ikeda80_3-dimI}), nor in dimension~$5$ with non-cyclic fundamental groups (see~\cite[Thm.~3.9]{Ikeda80_3-dimII}).
In addition, there are no examples addressing Question~\ref{question} in even dimension~$2d$, since $S^{2d}$ and the real projective space $\mathbb{P}^{2d}(\mathbb{R})$ are the only $2d$-dimensional spherical space forms, and they are obviously not isospectral.

Our first main theorem provides an affirmative answer to Question~\ref{question}.

\begin{theorem}\label{thm:main2-contraejemplos}
There exist infinitely many pairs of isospectral spherical space forms with non-isomorphic fundamental groups.
\end{theorem}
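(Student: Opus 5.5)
The plan is a Sunada-type argument built on almost conjugacy, which the keywords point to. Suppose $G_1,G_2\subset \Ot(d+1)$ are finite groups acting freely on $S^d$. The multiplicity of the eigenvalue $k(k+d-1)$ on $S^d/G_i$ is $\dim \mathcal H_k^{G_i}$, where $\mathcal H_k$ is the space of spherical harmonics of degree $k$. This dimension is
$$\frac{1}{|G_i|}\sum_{g\in G_i}\chi_{\mathcal H_k}(g).$$
The character $\chi_{\mathcal H_k}(g)$ depends only on the conjugacy class of $g$ in $\Ot(d+1)$, that is, on the multiset of eigenvalues of $g$. So it suffices to arrange that $|G_1|=|G_2|$ and that there is a bijection $G_1\to G_2$ sending each element to an $\Ot(d+1)$-conjugate element. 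In other words, $G_1$ and $G_2$ should be almost conjugate in $\Ot(d+1)$. This also gives strong isospectrality, consistent with Remark~\ref{rem:stronglyisospectral}.

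The construction uses a finite group $H$ with two non-isomorphic subgroups $\Gamma_1,\Gamma_2$ that are almost conjugate in $H$: for every conjugacy class $C$ of $H$, $|C\cap\Gamma_1|=|C\cap\Gamma_2|$. I would take a fixed point free representation $\rho$ of $H$, or at least a representation $\rho:H\to\Ot(d+1)$ whose restrictions to $\Gamma_1$ and $\Gamma_2$ are both fixed point free. Then $G_i=\rho(\Gamma_i)$ act freely on $S^d$. Almost conjugacy in $H$ carries over to almost conjugacy in $\Ot(d+1)$, and the spectra agree. Since the $\Gamma_i$ are non-isomorphic, the fundamental groups differ.

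Freeness is the main obstacle. An element $\rho(\gamma)$ acts freely exactly when $1$ is not an eigenvalue, so $\rho|_{\Gamma_i}$ must avoid eigenvalue $1$ on every nontrivial element. Non-isomorphic almost conjugate pairs are usually $p$-groups or have non-cyclic Sylow subgroups. These cannot be fixed point free, since fixed point free groups have all abelian subgroups cyclic. So I would restrict to Type~I candidates, that is, metacyclic groups
$$\Gamma=\langle A,B \mid A^m=B^n=1,\ BAB^{-1}=A^r\rangle$$
with $\gcd(m,n(r-1))=1$ and $r^n\equiv1\pmod m$. I would look for two such groups, with parameters $(m,n,r)$ and $(m,n,r')$, that are non-isomorphic but have the same order statistics and the same eigenvalue data under suitable fixed point free representations. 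Concretely, I would compare directly the multisets of eigenvalue-lists of their fixed point free representations, computed via Wolf's induced-representation description, in place of embedding both in a common $H$.

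The key computational step is to find parameters for which these multisets coincide. Each element of a Type~I group is conjugate to some $A^aB^b$. Its eigenvalues under the representation induced from a faithful character of $\langle A,B^{d'}\rangle$ are explicit roots of unity. One then needs a bijection matching them, and I would try a computer search over small $m,n$ first. Matching orbits of $r$ versus $r'$ in $(\Z/m)^\times$, with the same orders but generating different subgroups, should produce the required coincidence. Once one pair is found, I would try to get infinitely many pairs by letting a parameter such as $m$ run through an infinite family of primes, or by taking joins and products with a fixed compatible cyclic factor coprime to everything. The latter amounts to tensoring with a lens-space factor, which preserves both the almost-conjugacy matching and the non-isomorphism.
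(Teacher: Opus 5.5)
\textbf{There is a genuine gap.} Your reduction is correct and matches the paper's. An eigenvalue-preserving bijection $G_1\to G_2$ gives strong isospectrality (Proposition~\ref{prop:almost-conjugate}), and comparing Type~I groups $\Gamma_d(m,n,r_1)$, $\Gamma_d(m,n,r_2)$ directly through Ikeda's explicit eigenvalues is the right setting. But the proposal stops exactly where the content of the theorem begins: you never produce a single non-isomorphic pair with a verified eigenvalue matching.

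``A computer search should produce the required coincidence'' is not an argument. Your heuristic ($r$ and $r'$ of the same order generating different subgroups of $\Z_m^\times$) is far from sufficient. Theorem~\ref{thm:main1} already forces $\gcd(r_1^c-1,m)=\gcd(r_2^c-1,m)$ for all $c\mid d$, and the search behind Table~\ref{table} finds coincidences only in one special configuration.

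That configuration is the missing idea: $n=2d$ and $r_1r_2\equiv-1\pmod m$. Under it, the naive bijection $A_1^aB_1^b\mapsto A_2^aB_2^b$ works. One shows $\gcd(r_1^q-1,m)=\gcd(r_2^q-1,m)$ for $q\mid d$, and that the exponents $\sum_h r_i^{\gcd(d,b)h}$ in Ikeda's formula agree mod $m$. Then $r_1^{j-1}\equiv(-1)^{j-1}r_2^{\gcd(b,d)-(j-1)}$ modulo $u=\gcd(r_i^{\gcd(b,d)}-1,m)$. Hence the eigenvalues of $\pi_{1,1}^{(1)}(A_1^aB_1^b)$ and $\pi_{1,1}^{(2)}(A_2^aB_2^b)$ agree up to complex conjugation of some of them. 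This is invisible after realification because $\xi_{n/d}=-1$ is real.

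Non-isomorphism is a separate issue. Under these hypotheses $d$ is a power of $2$, and $d=2,4$ give isomorphic groups (Remark~\ref{rem:smallest-dim}). So one needs $d\ge 8$ and $\Z_m^\times$ non-cyclic, for example $\Gamma_8(85,16,2)\not\simeq\Gamma_8(85,16,42)$.

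For infinitude, your first suggestion cannot work. If $m$ runs through primes, $\Z_m^\times$ is cyclic. By Theorem~\ref{thm:main1} together with Remark~\ref{rem:Z_m*cyclic}, isospectral quotients of such groups then always have isomorphic fundamental groups.

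Your ``join'' idea is what the paper does. Once one pair exists, replacing $\rho_{1,1}$ by $\rho_{1,1}^{\oplus p}$ preserves the matching, since eigenvalue multisets simply add up. This gives pairs in every dimension $16p-1$.

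The product with a coprime cyclic factor can also be made to work, but only via the real tensor product with a $2$-dimensional rotation representation, which doubles the dimension. Tensoring the complex representation $\pi_{1,1}^{(i)}$ with a character would require the complex eigenvalue multisets to match. In this construction they match only up to conjugation, so your claim that the matching is automatically preserved is not justified as stated.
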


The proof of Theorem~\ref{thm:main2-contraejemplos} follows from the explicit construction provided in Theorem~\ref{thm:contraejemplos}. 
In particular, the dimension of such pairs is unbounded.

Subsection~\ref{subsec:ejemploscomputacionales} includes an algorithm to search for isospectral spherical space forms with non-isomorphic fundamental groups.
The results displayed in Table~\ref{table} show that the construction in Theorem~\ref{thm:contraejemplos} is quite general, since all the examples found are covered by that construction.

\begin{remark}
Among manifolds of constant sectional curvature, the positively curved case remained the only one for which no examples of isospectral manifolds with non-isomorphic fundamental groups were known.
In the negatively curved setting, Vign{\'e}ras~\cite{Vigneras80} constructed pairs of hyperbolic manifolds, while in the flat case, Doyle and Rossetti~\cite{DoyleRossetti04_tetra-didi} constructed the pair known as Tetra and Didi.
\end{remark}

One can observe from the computational results in Table~\ref{table} that the counterexamples to the audibility of the fundamental group (among spherical space forms) are rather special.
This naturally leads to the problem of finding conditions under which the fundamental group of a spherical space form can be heard.

Any group of Type~I is generated by elements $A$ and $B$ satisfying $A^m = B^n = e$ and $BAB^{-1} = A^r$, for integers $m,n,r$ such that
$
\gcd\big((r-1)n,m\big)=1
$ and 
$
r^n \equiv 1 \pmod m
$
(see Lemma~\ref{lem:Burnside1}).
We denote such a group by $\Gamma_d(m,n,r)$, where $d$ denotes the order of the class of $r$ in $\mathbb{Z}_m^\times$.
It turns out that all irreducible real representations of a fixed point free Type~I group $\Gamma_d(m,n,r)$ have dimension $2d$.
Moreover,
$
\Gamma_{d_1}(m_1,n_1,r_1)\simeq \Gamma_{d_2}(m_2,n_2,r_2)
$
if and only if $m_1=m_2$, $n_1=n_2$, $d_1=d_2$, and $r_1 \equiv r_2^c \pmod m$ for some $c\in\mathbb{Z}$ (see Proposition~\ref{prop:TypeI-uptoisomorphism}).

The next result shows that several features of fundamental groups of Type~I are audible.

\begin{theorem}\label{thm:main1}
Let $M_1$ and $M_2$ be isospectral spherical space forms with fundamental groups $G_1$ and $G_2$ respectively.
If $G_1$ is of Type~I, say $G_1 \simeq \Gamma_d(m,n,r_1)$, then $G_2$ is also of Type~I, with $G_2 \simeq \Gamma_d(m,n,r_2)$, and
\[
\gcd(r_1^c - 1, m) = \gcd(r_2^c - 1, m)
\qquad\text{for every positive divisor $c$ of $d$.}
\]
\end{theorem}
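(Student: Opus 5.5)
We plan to translate isospectrality into a group-theoretic condition and then read off the invariants. The standard tool is Ikeda's generating function: for $M=S^{N-1}/G$ with $G\subset \Ot(N)$, the spectrum determines
\[
F_G(z)=\frac{1}{|G|}\sum_{g\in G}\frac{1-z^2}{\det(1-zg)},
\]
and conversely. So $\Spec(M_1)=\Spec(M_2)$ is equivalent to $F_{G_1}=F_{G_2}$. Since $G_i$ acts freely, $1$ is not an eigenvalue of any $g\neq e$, and each $\det(1-zg)^{-1}$ is a rational function whose poles lie at the inverses of the eigenvalues of $g$. The first step is to show that the multiset of characteristic polynomials $\{\det(1-zg)\}_{g\in G}$ is audible. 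We will argue by partial fractions, ordering elements by the order of their eigenvalues and inducting from the maximal order downward to peel off contributions. A simpler route is to use the fact that $F_G(z)$ equals the Molien-type series counting $G$-invariant harmonic polynomials. Equality of these series for all degrees, combined with the freeness of the action, yields equality of the counts
\[
\#\{g\in G_i : g \text{ has eigenvalue multiset } X\}
\]
for every $X$. This gives: for each $k$, the number of elements of order $k$ coincides in $G_1$ and $G_2$. It also gives, for each $g$, the number of elements with prescribed eigenvalue-order data.

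Next we deduce that $G_2$ is of Type~I. The number of elements of each order agreeing is not by itself enough to detect cyclic Sylow subgroups, so we will use the classification. A fixed point free group has cyclic Sylow subgroups for odd $p$, and cyclic or generalized quaternion $2$-Sylow subgroups. The Type~I condition fails exactly when the $2$-Sylow subgroup is generalized quaternion, or when a binary polyhedral factor occurs. A generalized quaternion $2$-group has more than one element of order $4$ inside a $2$-Sylow. Counting elements of $2$-power order and comparing with $|G|$ then separates the cases: in a Type~I group, the elements of order dividing $2^a$ with $2^a\,\|\,|G|$ are controlled by the number of Sylow $2$-subgroups, which are cyclic. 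I expect this case analysis over Types~II--VI to be the main obstacle. First we will try using the number of elements of order $2$, which is exactly one in all fixed point free groups of even order. Then we will use the count of elements of order $4$ against the count of order $2^a$.

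Once both groups are of Type~I, write $G_i=\Gamma_{d_i}(m_i,n_i,r_i)$. The dimension $2d$ of the irreducible free representations is audible, so $d_1=d_2=d$, using that the minimal free representation is irreducible, or restricting to irreducible summands. The normal subgroup $\langle A\rangle$ consists exactly of the elements whose order divides $m$. The conditions $\gcd((r-1)n,m)=1$ and $r^n\equiv1$ give $|G|=mn$, and the order of $A^aB^b$ can be computed explicitly. In particular, the element $A^aB^{b}$ with $\gcd(b,n)$ fixed has order $\frac{n}{\gcd(b,n)}\cdot \frac{m}{\gcd(m,\,a\cdot\cdots,\,r^{b}-1)}$-type expression. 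Counting elements of each order therefore recovers $m$ and $n$ and, for each $b$, the quantity $\gcd(r^{b}-1,m)$. Indeed, $B^b$ centralizes exactly $\gcd(r^b-1,m)$ powers of $A$, and the elements $A^aB^b$ whose order equals the order of $B^b$ number exactly $\gcd(r^b-1,m)$.

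Since $r$ has order $d$ modulo $m$, $\gcd(r^b-1,m)$ depends only on $\gcd(b,d)$. Taking $b=c\mid d$ yields the stated equalities. We will organize the count by orders and use Möbius inversion to isolate each $c$.
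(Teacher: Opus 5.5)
Your first step claims more than is true. The multiset $\{\det(1-zg)\}_{g\in G}$ is not a spectral invariant. If it were, any two isospectral spherical space forms would be almost conjugate, hence strongly isospectral by Proposition~\ref{prop:almost-conjugate}. That contradicts the lens spaces of \cite{LMR-onenorm}, recalled in the introduction, which are isospectral but not strongly isospectral.

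Partial fractions only separate \emph{order classes}. Because the action is free, every eigenvalue of $g$ is a primitive $\op{ord}(g)$-th root of unity. So $S_k(z)=\sum_{\op{ord}(g)=k}\det(1-zg)^{-1}$ is a proper rational function with poles only at primitive $k$-th roots of unity. Uniqueness of partial fractions applied to $|G|F_G(z)/(1-z^2)=\sum_k S_k(z)$, with $|G|$ known from the volume, shows that each $S_k$ is audible. Since $S_k(0)$ is the number of elements of order $k$, the consequence you actually use is true, but it needs this argument, not the one you give.

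Your Type~I step is only a list of things to try. What settles it is this: if $2^a\,\|\,mn$, then $G_1$ contains an element of order $2^a$, namely a generator of its cyclic $2$-Sylow subgroup. So $G_2$ contains one as well, which excludes the generalized quaternion $2$-Sylow subgroups of Types~II--VI. This is the paper's argument.

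The genuine gap is the identification of $(m,n,d)$. Your claim that the dimension $2d$ of the irreducible free representations is audible is unsupported. The $M_i$ need not be irreducible, and equal dimensions only give $d_1p_1=d_2p_2$, where $p_i$ is the number of irreducible summands. Isospectrality says nothing about individual summands, so you cannot ``restrict to irreducible summands''.

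Likewise, ``counting elements of each order recovers $m$, $n$ and $\gcd(r^b-1,m)$'' comes with no mechanism, and the count you quote is wrong. Write $u=\gcd(r^b-1,m)$. Then $(A^aB^b)^{\op{ord}(B^b)}=A^{aS}$ with $\gcd(S,m)=m/u$, so $m/u$ values of $a$ (not $u$) give order $\op{ord}(B^b)$; for example, only $a=0$ works when $b=d$. Moreover, the audible data are totals over the whole group, not counts for a fixed $b$, so you would still have to match the divisors $c\mid d$ of the two groups.

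This matching is the substance of the paper's proof, which proceeds as follows.
\begin{enumerate}
\item It computes $\sigma(\Gamma_d(m,n,r))=\bigcup_{c\mid d}\{k: k\mid\gcd(r^c-1,m)\tfrac nc\}$ (Theorem~\ref{thm:sigma}).
\item It observes that the maximal order $m_1n_1/d_1$ of $G_1$ must equal $\gcd(r_2^{c_2}-1,m_2)\,n_2/c_2$ for some $c_2\mid d_2$.
\item It then uses $\gcd(m/u,u)=1$ together with the fixed point free condition of Lemma~\ref{lem:Burnside2} (every prime divisor of $d$ divides $n/d$) to force $c_i=d_i$. This gives $d_1=d_2$, and then $m_1=m_2$, $n_1=n_2$.
\item The gcd identities come from comparing the elements $\gcd(r_i^c-1,m)\,n/c$ of the common set of orders.
\end{enumerate}
The paper needs the fixed point free hypothesis exactly here, and nothing in your sketch plays that role. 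Without an argument of this kind, neither $G_2\simeq\Gamma_d(m,n,r_2)$ nor the final equalities are established.
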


A simple corollary of this result, together with Remark~\ref{rem:Z_m*cyclic}, describes a situation in which the fundamental group is audible.

\begin{corollary}
Let $M$ be a spherical space form with fundamental group $G \simeq \Gamma_d(m,n,r)$ of Type~I. 
If $m=p^k$ or $m=2p^k$, for some odd prime $p$ and $k\in\mathbb{N}$, then any spherical space form isospectral to $M$ has fundamental group isomorphic to $G$.
\end{corollary}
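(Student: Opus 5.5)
By Theorem~\ref{thm:main1}, any spherical space form $M'$ isospectral to $M$ has fundamental group of Type~I, $G'\simeq\Gamma_d(m,n,r')$, with the same $m$, $n$ and $d$. Moreover, $\gcd(r^c-1,m)=\gcd(r'^c-1,m)$ for every divisor $c$ of $d$. By Proposition~\ref{prop:TypeI-uptoisomorphism}, it therefore suffices to show that $r'\equiv r^c \pmod m$ for some integer $c$.

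The key input is Remark~\ref{rem:Z_m*cyclic}. When $m=p^k$ or $m=2p^k$ with $p$ an odd prime, the group $\Z_m^\times$ is cyclic. By definition of $\Gamma_d(m,n,r)$, the classes of $r$ and $r'$ both have order exactly $d$ in $\Z_m^\times$. In a cyclic group there is a unique subgroup of each order dividing the group order. Hence $\langle r\rangle=\langle r'\rangle$ as subgroups of $\Z_m^\times$, and so $r'\equiv r^c\pmod m$ for some $c$ (with $\gcd(c,d)=1$).

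Proposition~\ref{prop:TypeI-uptoisomorphism} then gives $G'\simeq \Gamma_d(m,n,r')\simeq\Gamma_d(m,n,r)\simeq G$, which is the claim.

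There is no real obstacle here. The gcd conditions of Theorem~\ref{thm:main1} are not even needed, only the equality of $m$, $n$ and $d$. The one point to check is that $d$ is indeed the order of $r'$ modulo the same $m$. This is part of the conclusion of Theorem~\ref{thm:main1}, since that theorem writes $G_2\simeq\Gamma_d(m,n,r_2)$ with the same subscript $d$. So the argument reduces to the uniqueness of subgroups of a given order in a cyclic group.
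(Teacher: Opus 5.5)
Your proof is correct and is essentially the paper's own argument. The paper likewise combines Theorem~\ref{thm:main1}, which gives $G'\simeq\Gamma_d(m,n,r')$ with the same $m,n,d$, with Remark~\ref{rem:Z_m*cyclic}: uniqueness of the order-$d$ subgroup of the cyclic group $\Z_m^\times$, together with Proposition~\ref{prop:TypeI-uptoisomorphism}, yields $G'\simeq G$. You are also right that the gcd conditions are not needed and that the only point requiring care is that $r$ and $r'$ both have order $d$ modulo the same $m$.
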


The article is organized as follows.
Section~\ref{sec:preliminaries} reviews some facts on spherical space forms and groups of Type~I.
In Section~\ref{sec:main1}, we prove Theorem~\ref{thm:main1}.
The counterexamples to the audibility of the fundamental group of a spherical space form are presented in Section~\ref{sec:contraejemplos}, where Theorem~\ref{thm:main2-contraejemplos} is proved.
The article concludes with some open questions in the final section.

\subsection*{Acknowledgments}

The authors would like to thank the anonymous referee for a very careful reading of the manuscript and for numerous valuable suggestions and comments, all of which helped to improve the presentation of the paper.

\section{Preliminaries}\label{sec:preliminaries}

In this section we introduce the manifolds of interest for this article: 
spherical space forms with fundamental group of Type I. 
We refer the reader to \cite[Ch.~5--7]{Wolf-book} for a fuller treatment, and also to \cite[\S3--5]{Wolf01} for a summarized version.

\subsection{Spherical space forms}
A \emph{spherical space form} is a complete Riemannian manifold with constant positive sectional curvature, which we assume equal to 1. 
By the Killing-Hopf Theorem, we have that such a manifold is isometric to a quotient $S^n/G$, where $S^n$ is the unit sphere centered at the origin in $\R^{n+1}$ and $G$ is a discrete subgroup of the isometry group $\text{Iso}(S^n)\simeq \Ot(n+1)$ acting freely on $S^n$.
The latter condition is equivalent to requiring that $1$ is not an eigenvalue of the matrix $g$ for any $g \in G \smallsetminus \{\Id\}$.
Furthermore, the fundamental group of a spherical space form $S^n/G$ is isomorphic to $G$. 

Even dimensional spherical space forms are only the round spheres and the real projective spaces. When $G$ is cyclic and $n$ is odd, $S^n/G$ is called a \emph{lens space}. 
In this article we focus on odd-dimensional spherical space forms with non-cyclic fundamental groups.

Let $\Gamma$ be a finite group. 
A \emph{fixed point free representation} of $\Gamma$ is a faithful representation $\rho:\Gamma\to\Ot(q+1)$ such that $\rho(\Gamma)$ acts on the unit sphere $S^q$ without fixed points. An abstract finite group is called \emph{fixed point free} if it admits a fixed point free representation. 
If $\Gamma$ is a fixed point free group and $\rho:\Gamma\to \Ot(q+1)$ is a fixed point free representation of $\Gamma$ of dimension $q+1$, then $S^q/\rho(\Gamma)$ is a spherical space form. 
Of course, any spherical space forms can be written in this form. 

Isometric spherical space forms are in particular homeomorphic, therefore their fundamental groups are isomorphic. 
Two $q$-dimensional spherical space forms $S^q/\rho_1(\Gamma)$ and $S^q/\rho_2(\Gamma)$ are isometric if and only if there is an automorphism $\psi$ of $\Gamma$ such that $\rho_1$ is equivalent to $\rho_2\circ\psi$. 

Joseph Wolf, in his famous book \textit{Spaces of constant curvature} (\cite{Wolf-book}), classified fixed point free groups into six different types (I--VI). 
Also, for any given fixed point free group, he classified all possible fixed point free representations unless of isometry. 
We will focus in the Type I groups and their representations.

\subsection{Type I groups}\label{subsec:typeIgroups}

Following \cite{Wolf-book}, we call a finite group of \emph{Type~I} if all its Sylow subgroups are cyclic. 
Burnside proved the following two results that characterizes fixed point free Type~I groups (see \cite[Thm.~5.4.1]{Wolf-book}). 

\begin{lemma}\label{lem:Burnside1}
Let $m,n$ and $r$ be positive integers satisfying that $\gcd((r-1)n,m)=1$ and $r^n\equiv 1\pmod m$. 
Then, the group $\Gamma$ generated by elements 
$A,B$ satisfying
\begin{equation}\label{eq:TypeIrelations}
A^m=B^n=1
\qquad\text{and}\qquad 
BAB^{-1}=A^r, 
\end{equation}
has order $mn$ and is of Type~I. 
Conversely, any group of Type~I is isomorphic as one as above. 
\end{lemma}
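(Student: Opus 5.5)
The statement has two directions, and I will treat them separately. The first is that the presented group has order exactly $mn$ and all its Sylow subgroups are cyclic. The second, which is the substantial one, is that every group with cyclic Sylow subgroups admits such a presentation. This is Burnside's classical theorem, and the plan follows the standard route through Burnside's transfer theorem and the Schur--Zassenhaus theorem.

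\emph{Forward direction.} Since $r^n\equiv 1\pmod m$, the assignment $A\mapsto A^r$ defines an automorphism $\alpha$ of $\Z_m$ with $\alpha^n=1$. We can therefore form the semidirect product $\Z_m\rtimes_\alpha\Z_n$. It has order $mn$ and satisfies the relations \eqref{eq:TypeIrelations}. Conversely, those relations let us write every element of $\Gamma$ as $A^iB^j$ with $0\le i<m$ and $0\le j<n$. So $|\Gamma|\le mn$, and $\Gamma$ surjects onto the semidirect product, which forces $\Gamma\simeq \Z_m\rtimes\Z_n$ and $|\Gamma|=mn$.

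Next we need $\gcd(m,n)=1$, which is part of the hypothesis $\gcd((r-1)n,m)=1$. Given this, let $p$ be a prime and $P$ a Sylow $p$-subgroup. If $p\mid m$, then $P$ lies in the normal cyclic subgroup $\langle A\rangle$, so $P$ is cyclic. If $p\mid n$, then $P$ is conjugate to the Sylow $p$-subgroup of the complement $\langle B\rangle$, so again $P$ is cyclic. Hence $\Gamma$ is of Type~I.

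\emph{Converse.} Let $G$ be a group all of whose Sylow subgroups are cyclic.

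\textbf{Step 1: $G$ is solvable, and in fact $p$-nilpotent for its smallest prime divisor $p$.} Let $P$ be a Sylow $p$-subgroup. Since $P$ is cyclic, $N_G(P)/C_G(P)$ embeds in $\Aut(P)$, whose order is $p^{k-1}(p-1)$. The quotient also has order prime to $p$, because $P\le C_G(P)$. Every prime divisor of $|N_G(P)/C_G(P)|$ is at least $p$ yet divides $p-1$, so the quotient is trivial. Burnside's transfer theorem then gives a normal $p$-complement. Inducting on $|G|$ yields a Sylow tower, and in particular $G$ is solvable.

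\textbf{Step 2: $G'$ and $G/G'$ are cyclic of coprime orders.} Because $G$ is solvable with cyclic Sylow subgroups, $G$ is metabelian. To see this, let $F$ be the Fitting subgroup. It is nilpotent with cyclic Sylow subgroups, hence cyclic. Then $G/C_G(F)$ embeds in the abelian group $\Aut(F)$, and $C_G(F)=F$ in a solvable group. Consequently $G'\le F$, so $G'$ is cyclic. The quotient $G/G'$ is abelian with cyclic Sylow subgroups, hence cyclic.

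For coprimality, suppose a prime $q$ divides both $|G'|$ and $|G/G'|$. Take $Q$ a Sylow $q$-subgroup. It is cyclic and meets $G'$ in a nontrivial proper subgroup. Focal subgroup arguments, or again Burnside's transfer applied to $N_G(Q)$, show that $Q\cap G'=[Q,N_G(Q)]$. On the other hand, the action of $N_G(Q)$ on the cyclic group $Q$ is either trivial or fixed-point-free on $Q/\Phi(Q)$. In the first case $Q\cap G'=1$; in the second, $Q\le G'$. Either way we contradict the assumption.

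\textbf{Step 3: build the presentation.} Set $m=|G'|$, $n=|G/G'|$, and $G'=\langle A\rangle$. Since $\gcd(m,n)=1$, Schur--Zassenhaus gives a complement $\langle B\rangle\simeq\Z_n$. Conjugation by $B$ acts as $A\mapsto A^r$ for some $r$, and $r^n\equiv1\pmod m$ because $B^n=1$. It remains to check $\gcd(r-1,m)=1$. The commutator subgroup is generated by the elements $[B,A^i]=A^{i(r-1)}$, and these generate exactly $\langle A^{r-1}\rangle$. But that commutator subgroup is $G'=\langle A\rangle$ itself, so $A^{r-1}$ generates $\langle A\rangle$, which means $r-1$ is invertible mod $m$.

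\emph{Main obstacle.} I expect the coprimality in Step 2 to be the hardest part, since it needs a careful transfer or focal-subgroup argument. If that route becomes messy, the fallback is to simply cite \cite[Thm.~5.4.1]{Wolf-book}, which the paper already invokes for exactly this result.
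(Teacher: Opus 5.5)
Your proof is correct. It necessarily differs from the paper, which does not prove this lemma: the paper quotes it as Burnside's theorem and points to \cite[Thm.~5.4.1]{Wolf-book}. You instead give the standard self-contained argument:
\begin{enumerate}
\item a normal $p$-complement for the smallest prime, via Burnside's transfer theorem, and hence solvability;
\item $G'\le F(G)$ cyclic, because $C_G(F)=F$ and $\Aut(F)$ is abelian;
\item coprimality of $|G'|$ and $|G:G'|$, via the identity $Q\cap G'=[Q,N_G(Q)]$ for an abelian Sylow subgroup $Q$;
\item a cyclic complement from Schur--Zassenhaus.
\end{enumerate}
The citation keeps the paper short and leaves all the group theory to Wolf's book. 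Your route makes the lemma independent of that source, at the cost of importing transfer, focal subgroups and the Fitting subgroup, none of which the rest of the paper uses.

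Three spots deserve one more line each when you write it out.
\begin{itemize}
\item In the coprimality step, say explicitly that $N_G(Q)/C_G(Q)$ is a $q'$-group acting coprimely on the cyclic, hence indecomposable, group $Q$. Fitting's decomposition $Q=[Q,N_G(Q)]\times C_Q(N_G(Q))$ then forces $[Q,N_G(Q)]\in\{1,Q\}$.
\item For $\gcd(r-1,m)=1$, do not just assert that $G'$ is generated by the $[B,A^i]$. Instead note that $\langle A^{r-1}\rangle$ is characteristic in the normal subgroup $\langle A\rangle$, so it is normal in $G$. The quotient by it is abelian because the images of $A$ and $B$ commute, so $G'\le\langle A^{r-1}\rangle$.
\item Finish the converse explicitly. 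Since $G=\langle A\rangle\langle B\rangle$, the abstractly presented group surjects onto $G$. Your forward direction gives both groups order $mn$, so the surjection is an isomorphism.
\end{itemize}
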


We denote by $\Gamma_d(m,n,r)$ the Type I group as in Lemma~\ref{lem:Burnside1}, where 
\begin{equation*}
\text{$d$ denotes the order of the class of $r$ in $\Z_m^\times$.}
\end{equation*}
Here, $\Z_m^\times$ stands for the multiplicative group of integers modulo $m$. 
Note that $d$ divides $n$ since $r^n\equiv 1\pmod m$.

\begin{lemma}\label{lem:Burnside2}
A Type~I group $\Gamma_d(m,n,r)$ is fixed point free if and only if $n/d$ is divisible by any prime divisor of $d$.
\end{lemma}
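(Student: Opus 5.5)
This is Burnside's (or Zassenhaus's) criterion; Wolf cites it as \cite[Thm.~5.4.1]{Wolf-book}. I would reprove it by reducing fixed point freeness to conditions on subgroups of order $pq$ and then checking those conditions on $\Gamma=\Gamma_d(m,n,r)$. The standard necessary condition (Smith/Burnside) is that if $\Gamma$ acts freely on a sphere, every subgroup of order $pq$ ($p,q$ primes, possibly equal) is cyclic. Its proof goes as follows. For $p=q$, a free linear action of $\Z_p\times\Z_p$ is impossible: decompose the representation into characters and compare $|\Gamma|$ with the sum of characters, which would force a nontrivial element to have eigenvalue $1$. For $p\neq q$, take a nonabelian group of order $pq$; its irreducible representations of dimension $q$ are induced from $\Z_p$, and the elements of order $q$ in such an induced representation have eigenvalue $1$.

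\textbf{Necessity.} Assume $\Gamma$ is fixed point free. Let $p$ be a prime dividing $d$, and suppose $p\nmid n/d$. I would build a subgroup of order $p\cdot\ell$ that is nonabelian, where $\ell$ is a prime dividing $m$.
\begin{itemize}
\item Since $p\mid d$ and $p\nmid n/d$, the $p$-part of $n$ equals the $p$-part of $d$.
\item Hence $B^{n/p^{a}}$, with $p^a$ the exact power of $p$ dividing $n$, generates a cyclic $p$-group acting on $\langle A\rangle$ through $r^{n/p^a}$.
\item Take $C$ the element of order $p$ in it, i.e.\ $C=B^{n/p}$. It acts on $\langle A\rangle$ by $r^{n/p}$. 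This is nontrivial mod $m$ because the order $d$ of $r$ does not divide $n/p$: the $p$-part of $n/p$ is smaller than the $p$-part of $d$.
\item Choose a prime $\ell\mid m$ such that $r^{n/p}\not\equiv1\pmod{\ell}$, and let $A'$ be an element of order $\ell$ in $\langle A\rangle$.
\end{itemize}
Then $\langle A',C\rangle$ has order $\ell p$ and is nonabelian, a contradiction.

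\textbf{Sufficiency.} Assume every prime divisor of $d$ divides $n/d$. I would construct an explicit representation:
\[
\pi=\operatorname{Ind}_{\langle A,B^d\rangle}^{\Gamma}\chi,
\qquad
\chi(A)=e^{2\pi\mi/m},\quad \chi(B^d)=e^{2\pi\mi/n},
\]
where $\langle A,B^d\rangle$ is abelian of index $d$, and then show $\pi$ is fixed point free. An element $A^sB^t$ with $d\nmid t$ permutes the $d$ blocks without fixed block, so its matrix has block-permutation form. Its eigenvalue $1$ analysis reduces to powers $(A^sB^t)^k$ landing in $\langle A,B^d\rangle$.

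I would therefore reduce to checking that no nontrivial element of $\langle A,B^d\rangle$ has eigenvalue $1$. By the subgroup-of-order-$pq$ criterion, this in turn reduces to elements of prime order. So the concrete task is:
\begin{itemize}
\item Elements of prime order lying in $\langle A\rangle$ act by primitive roots of unity (conjugates of $\chi$), so they have no eigenvalue $1$.
\item Elements of prime order $p$ lying outside $\langle A\rangle$ are conjugate to powers of $B^{n/p}$.
\end{itemize}

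The main obstacle is the second case. If $p\nmid d$, then $B^{n/p}$ lies in $\langle B^d\rangle$ and acts by scalars $e^{2\pi\mi k/p}$ with $k\not\equiv0$. If $p\mid d$, the hypothesis $p\mid n/d$ is exactly what guarantees that $B^{n/p}$ is a power of $B^d$, hence again acts by nontrivial roots of unity. This is the step I expect to need the most care, and there I would follow Wolf's computation of the induced character.
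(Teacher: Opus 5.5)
Your strategy is the classical Burnside--Wolf argument: Burnside's $pq$-condition for necessity, and an explicit induced representation for sufficiency. The paper gives no proof of its own and only cites \cite[Thm.~5.4.1]{Wolf-book}. The strategy does prove the lemma, and after one correction your representation realifies to $\rho_{1,1}$ of Theorem~\ref{thm:irreps}. As written, however, the sufficiency half contains two actual errors.

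First, $\chi(B^d)=e^{2\pi\mi/n}$ is not a character. Since $B^d$ has order $n/d$, $\chi(B^d)$ must be an $(n/d)$-th root of unity, and you need it primitive, e.g.\ $\chi(B^d)=e^{2\pi\mi d/n}$ (compare the corner block $R(\tfrac{l}{n/d})$ in Theorem~\ref{thm:irreps}). Your final step depends on this. $B^d$ is central, so $\pi(B^d)$ is the scalar $\chi(B^d)$, and $B^{n/p}=(B^d)^{n/(dp)}$ acts by a nontrivial $p$-th root of unity only with this choice. With your value it would act by $e^{2\pi\mi/(dp)}$, which is incompatible with $B^n=1$.

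Second, the reduction to elements of prime order does not follow from the $pq$-criterion. That criterion is only a necessary condition for freeness and cannot be used in a sufficiency proof. The correct reason is elementary: if $\pi(g)v=v$ then $\pi(g^k)v=v$, and every $g\neq1$ has a power of prime order. Apply this to all of $\Gamma$ and drop the block-permutation detour, which is insufficient by itself: for $g\notin\langle A,B^d\rangle$, the power of $g$ landing in $\langle A,B^d\rangle$ could a priori be trivial, and then $\pi(g)$ would have eigenvalue $1$.

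With that reduction, your case analysis closes the proof. An element of prime order $p\mid m$ lies in the normal Hall subgroup $\langle A\rangle$. An element of prime order $p\mid n$ is, by Sylow's theorem and cyclicity of the Sylow subgroups, conjugate to a power of $B^{n/p}$. That element lies in $\langle B^d\rangle$ because $d\mid n/p$, which is automatic if $p\nmid d$ and equivalent to $p\mid n/d$ if $p\mid d$. A nontrivial $A^sB^{dt}$ has eigenvalues $e^{2\pi\mi sr^j/m}\,e^{2\pi\mi dt/n}$, $0\le j<d$, and these are never equal to $1$ since $\gcd(m,n/d)=1$. Finally, realify $\pi$ to obtain the $2d$-dimensional orthogonal representation required by the paper's definition.

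In the necessity half, the choice of a prime $\ell\mid m$ with $r^{n/p}\not\equiv1\pmod\ell$ needs an argument. Set $s=r^{n/p}$, so $s^p\equiv1\pmod m$. If $s\equiv1\pmod\ell$ with $\ell^e\,\|\,m$, then $s$ lies in the kernel of $\Z_{\ell^e}^\times\to\Z_\ell^\times$, which is an $\ell$-group, so $s\equiv1\pmod{\ell^e}$ because $\ell\neq p$. By the Chinese remainder theorem, $s\not\equiv1\pmod m$ therefore yields a suitable $\ell$.

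Three remarks on the $pq$-criterion itself. Only the case of distinct primes is used here, since the Sylow subgroups of a Type~I group are already cyclic. In that case you must also exclude one-dimensional constituents, which are trivial on the commutator subgroup, not only the induced ones. And the criterion should be stated for orthogonal actions, since for free topological actions on spheres the case of distinct primes fails in general.
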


The automorphism group of a Type I group $\Gamma_d(m,n,r)$ is given by (see \cite[Thm.~5.5.6]{Wolf-book}) 
\begin{equation}\label{eq:Aut(Gamma)}
\Aut(\Gamma_d(m,n,r)) = \{\psi_{s,t,u} : \gcd(s,m)=1=\gcd(t,n),\; t \equiv 1 \pmod d\},
\end{equation}
where $\psi_{s,t,u}$ is determined by $\psi_{s,t,u}(A)=A^s$ and $\psi_{s,t,u}(B)=B^tA^u$.

\begin{remark}\label{rem:Gamma_d(m,n,r)}
It is easy to see that any element in a Type I group $\Gamma_{d}(m,n,r)$ can be uniquely written as $A^aB^b$ with $0\leq a< m$ and $0\leq b<n$. 
In particular, its cardinality is equal to $mn$. 
Furthermore, 
\begin{equation*}
\Gamma_d (m,n,r) \text{ is cyclic }
\iff 
m=1
\iff
r=1
\iff 
d=1.
\end{equation*}
Note also that $m$ is necessarily odd because $\gcd(m,r)=\gcd(m,r-1)=1$. 
\end{remark}

\begin{lemma}\label{lem:(A^aB^b)^k}
Let $a,b$ be integer numbers with $b$ positive. 
On a Type~I group $\Gamma_d(m,n,r)$ with generators $A,B$ satisfying \eqref{eq:TypeIrelations}, one has that $B^bA^a=A^{ar^b}B^b$ and 
$$
(A^aB^b)^{k}= A^{a(1+r^b+r^{2b}+\dots+r^{(k-1)b} )}B^{kb}
\qquad\forall\, 
k\in\N. 
$$
\end{lemma}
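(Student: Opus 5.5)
The first identity follows from the defining relation $BAB^{-1}=A^r$. Conjugation by $B$ is a homomorphism, so $BA^aB^{-1}=(BAB^{-1})^a=A^{ar}$. Here $a$ may be any integer, including a negative one, since $A$ has finite order and exponents are read modulo $m$. Iterating $b$ times by induction on $b\ge 1$ gives
\[
B^bA^aB^{-b}=B^{b-1}A^{ar}B^{-(b-1)}=\cdots=A^{ar^b},
\]
that is, $B^bA^a=A^{ar^b}B^b$. The case $b=1$ is the relation itself, and the inductive step applies the case $b=1$ to the exponent $ar^{b-1}$.

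For the power formula I will induct on $k$. The case $k=1$ is trivial. Assuming $(A^aB^b)^k=A^{aS_k}B^{kb}$ with $S_k=1+r^b+\dots+r^{(k-1)b}$, multiply on the right by $A^aB^b$:
\[
(A^aB^b)^{k+1}=A^{aS_k}B^{kb}A^aB^b=A^{aS_k}A^{ar^{kb}}B^{kb}B^b=A^{a(S_k+r^{kb})}B^{(k+1)b}.
\]
The middle step uses the first identity with $b$ replaced by $kb$. Since $S_k+r^{kb}=S_{k+1}$, this closes the induction.

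There is no real obstacle here. The only point to state explicitly is that the first identity is used with the exponent $kb$ rather than $b$, which is allowed because it was proved for every positive integer exponent.
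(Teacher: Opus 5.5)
Your proof is correct and follows essentially the paper's route: the first identity comes directly from $BAB^{-1}=A^r$, and the power formula is proved by induction on $k$. The only difference is that the paper multiplies by $A^aB^b$ on the left and commutes $B^b$ past $A^{aS_k}$, whereas you multiply on the right and commute $B^{kb}$ past $A^a$; both are equally valid.
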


\begin{proof}
The first assertion follows from $BAB^{-1}=A^r$. 
We proceed by induction on $k$ for the second.  
The case $k=1$ is immediate.  
Assume the formula holds for some $k\geq 1$. 
Hence
\begin{equation*}
\begin{aligned}
(A^aB^b)^{k+1}  &
= (A^aB^b)(A^{a(1+r^b+\cdots+r^{(k-1)b})}B^{kb})  
= A^a\big( B^b A^{a(1+r^b+\cdots+r^{(k-1)b})}\big) B^{kb}
\\ &
= A^a\big(  A^{a(1+r^b+\cdots+r^{(k-1)b})r^b} B^b\big) B^{kb}
= A^{a(1+r^b+\cdots+r^{kb})} B^{(k+1)b}
,
\end{aligned}
\end{equation*}
and the proof is complete. 
\end{proof}

The following result appears as Proposition~2.3 in \cite{Ikeda80_3-dimII}, without proof or further references.
We include a proof here for the sake of completeness.
The authors learned on MathOverflow \cite{Mathoverflow} from Dave Benson's answer that the converse is a particular case of the main theorem in \cite{Basmaji}.
In fact, the proof of the converse given below is based on HenrikRüping's answer.

\begin{proposition}\label{prop:TypeI-uptoisomorphism}
Type I groups $\Gamma_{d}(m,n,r_1)$ and $\Gamma_{d}(m,n,r_2)$ are isomorphic if and only if there exists an integer $c$ such that $r_1\equiv r_2^c \pmod{m}$, which is equivalent to $\langle r_1\rangle=\langle r_2\rangle$ as subgroups of $\Z_m^\times$.
\end{proposition}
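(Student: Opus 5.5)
The equivalence between $r_1\equiv r_2^c \pmod m$ for some $c$ and $\langle r_1\rangle=\langle r_2\rangle$ in $\Z_m^\times$ is elementary. Both $r_1$ and $r_2$ have the same order $d$, and $\Z_m^\times$ is abelian. If $r_1\in\langle r_2\rangle$, then $\langle r_1\rangle\subseteq\langle r_2\rangle$, and both are cyclic of order $d$, so they are equal. The converse is immediate. So it suffices to prove the group-theoretic equivalence.

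\emph{Sufficiency.} Assume $r_1\equiv r_2^c\pmod m$. Since $\langle r_1\rangle=\langle r_2\rangle$ has order $d$, the class $c$ is a unit modulo $d$. I would lift $c$ to an integer $t$ with $t\equiv c\pmod d$ and $\gcd(t,n)=1$, which is possible by a Dirichlet/CRT argument since $d\mid n$. Then $r_2^t\equiv r_1\pmod m$. In $\Gamma_d(m,n,r_2)=\langle A,B\rangle$, set $A'=A$ and $B'=B^t$. By Lemma~\ref{lem:(A^aB^b)^k}, $B'A'B'^{-1}=A^{r_2^t}=A^{r_1}$. Moreover $A'^m=B'^n=1$, and $\langle A',B'\rangle$ contains $A$ and $B$ because $\gcd(t,n)=1$. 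So $\Gamma_d(m,n,r_2)$ is a quotient of the group presented with $r_1$. Both groups have order $mn$ by Lemma~\ref{lem:Burnside1}, so the surjection $\Gamma_d(m,n,r_1)\to\Gamma_d(m,n,r_2)$ is an isomorphism.

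\emph{Necessity.} This is the main step. The plan is to find an intrinsic characterization of $\langle A\rangle$ and of the conjugation action. Since $\gcd(r-1,m)=1$, every element $A^a$ is a commutator: $B A^{x} B^{-1}A^{-x}=A^{x(r-1)}$, and $r-1$ is invertible mod $m$. Hence $\langle A\rangle\subseteq[\Gamma,\Gamma]$. Conversely, $\Gamma/\langle A\rangle\simeq\Z_n$ is abelian, so $[\Gamma,\Gamma]=\langle A\rangle$. This makes $\langle A\rangle$ the commutator subgroup, which is characteristic.

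Now let $\phi:\Gamma_d(m,n,r_1)\to\Gamma_d(m,n,r_2)$ be an isomorphism. It maps the cyclic group $\langle A_1\rangle$ onto $\langle A_2\rangle$, so $\phi(A_1)=A_2^s$ with $\gcd(s,m)=1$. Conjugation gives a homomorphism $\Gamma\to\Aut(\langle A\rangle)=\Z_m^\times$, and I would compare the images of these homomorphisms. Every element $A^aB^b$ acts on $A$ by exponent $r^b$, so the image of $\Gamma_d(m,n,r_i)$ is exactly $\langle r_i\rangle$. Under the identification via $\phi$, the image is transported by conjugation by the isomorphism $A_1\mapsto A_2^s$. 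Since $\Z_m^\times$ is abelian, this conjugation does nothing. Hence $\langle r_1\rangle=\langle r_2\rangle$.

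The only delicate points are the characterization of $\langle A\rangle$ as $[\Gamma,\Gamma]$ and the choice of $t$ coprime to $n$ in the sufficiency direction. For the latter, $t=c+kd$ coprime to $n$ exists because $\gcd(c,d)=1$. One can take $k$ equal to the product of the primes dividing $n$ but not $c$, and exclude the primes dividing $d$, which already do not divide $c$.
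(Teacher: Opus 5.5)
Your proof is correct. The sufficiency half is the paper's argument: the surjection $A_1\mapsto A_2$, $B_1\mapsto B_2^{t}$, followed by an order count. The only difference is that you produce $t\equiv c\pmod d$ with $\gcd(t,n)=1$ by the elementary choice $t=c+kd$ rather than by Dirichlet's theorem, and your choice of $k$ works. You also spell out the equivalence with $\langle r_1\rangle=\langle r_2\rangle$, which the paper leaves implicit.

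The necessity half takes a genuinely different route. The paper characterizes $\langle A_i\rangle$ as the set of elements of order dividing $m$ (Remark~\ref{rem:elements-order|m}), which relies on claims from the proof of Theorem~\ref{thm:sigma}. It then composes $\varphi$ with the explicit automorphisms $\psi_{a^*,1,0}$ and $\psi_{1,1,u}$ from Wolf's description \eqref{eq:Aut(Gamma)}, using Lemma~\ref{lem:ordenes} to solve for $u$. This normalizes $\varphi$ to $A_1\mapsto A_2$, $B_1\mapsto B_2^{c}$, and $r_1\equiv r_2^{c}$ is read off from the defining relation.

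You instead identify $\langle A\rangle$ as $[\Gamma,\Gamma]$. You then use the image of the conjugation map $\Gamma\to\Aut(\langle A\rangle)\cong\Z_m^\times$, which is $\langle r\rangle$, as an isomorphism invariant. Your sentence ``since $\Z_m^\times$ is abelian, this conjugation does nothing'' is right but terse. Concretely: if $gA_1g^{-1}=A_1^{e}$ and $\phi(A_1)=A_2^{s}$, then $\phi(g)A_2^{s}\phi(g)^{-1}=A_2^{se}$. Since $s$ is a unit mod $m$, $\phi(g)$ acts on $A_2$ by the same exponent $e$.

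Your approach is shorter and self-contained: it needs neither the cited automorphism group nor the forward reference to the order computations. The paper's approach yields slightly more. It gives a normal form for every isomorphism $\Gamma_1\to\Gamma_2$ modulo $\Aut(\Gamma_2)$, together with an exponent $c$ coprime to $n$ that realizes $r_1\equiv r_2^{c}$.
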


\begin{proof}
For each $i=1,2$, let $A_i,B_i$ denote the generators of $\Gamma_i:=\Gamma_d(m,n,r_i)$ satisfying \eqref{eq:TypeIrelations}. 

We first assume that $r_1 \equiv r_2^{c_0} \pmod{m}$ for some $c_0 \in \Z$.
Note that $c_0$ is relatively prime to $d$.  Dirichlet's theorem on arithmetic progressions ensures that there is a prime number $c$ satisfying $c\equiv c_0\pmod d$ and $c\nmid n$. 
It follows that $r_1 \equiv r_2^{c} \pmod{m}$.

We define a homomorphism $\varphi \colon \Gamma_1 \rightarrow \Gamma_2$ by setting $\varphi(A_1)=A_2$ and $\varphi(B_1)=B_2^c$.
This map is well defined since
$\varphi(A_1)^m = A_2^{m}=1$,
$\varphi(B_1)^n = (B_2^c)^n =1$, and
\begin{align*}
\varphi(B_1A_1B_1^{-1})
= \varphi(B_1)\varphi(A_1)\varphi(B_1)^{-1}
&= B_2^c A_2 B_2^{-c}
\\ & 
= A_2^{r_2^c}
	\qquad\text{(applying $B_2A_2B_2^{-1}=A_2^{r_2}$ $c$-times)}
\\ & 
= A_2^{r_1}
	\qquad\text{(since $r_1 \equiv r_2^c \pmod m$)}
\\ & 
=\varphi(A_1^{r_1}).
\end{align*}

It remains to show that $\varphi$ is surjective.
Since $c$ is coprime to $n$, there exists an inverse $c^*$ of $c$ modulo $n$ (i.e.\ $c^* \in \Z$ satisfying $cc^* \equiv 1 \pmod n$).
Since $\varphi(B_1^{c^*})=B_2^{cc^*}=B_2$, we conclude that the generators $A_2$ and $B_2$ lie in the image of $\varphi$, and hence $\varphi$ is surjective.

We now assume that $\varphi \colon \Gamma_1 \to \Gamma_2$ is an isomorphism of groups.
Remark~\ref{rem:elements-order|m} will show that 
\[
\{\gamma \in \Gamma_i : \text{the order of $\gamma$ divides $m$}\} = \langle A_i\rangle.
\]
It follows that $\varphi$ maps $\langle A_1\rangle$ onto $\langle A_2\rangle$, so $\varphi(A_1)=A_2^a$ for some integer $a$ relatively prime to $m$.
The strategy is to compose $\varphi$ with automorphisms of $\Gamma_2$ (see \eqref{eq:Aut(Gamma)}) to get an adequate isomorphism between $\Gamma_1$ and $\Gamma_2$.

Let $a^* \in \Z$ be an inverse of $a$ modulo $m$.
The isomorphism $\psi_{a^*,1,0}\circ\varphi \colon \Gamma_1 \to \Gamma_2$ maps $A_1$ to $A_2$.
Therefore, we may assume without loss of generality that $\varphi(A_1)=A_2$.
In this case, we have $\varphi(B_1)=A_2^sB_2^c$ for some integers $c,s$ with $\gcd(c,n)=1$.

One can prove that $\gcd(r_2^{c}-1,m)=1$ because $\gcd(c,n)=1$ (see Lemma~\ref{lem:ordenes}\eqref{item:gcd(r^j-1,m)=1}\eqref{item:gcd(j,d)=1=>gcd(r^j-1,m)=1} below replacing $j=c$).
Thus, the divisor $1+r_2+r_2^2+\dots+r_2^{c-1}$ of $r_2^c-1$ is also relatively prime to $m$.
Since $r_2$ is relatively prime to $m$ as well, there exists an integer $u$ satisfying
\begin{equation}\label{eq:u-isomorphism}
ur_2(1+r_2+\dots+r_2^{c-1}) \equiv -s \pmod m.
\end{equation}

Now, the isomorphism $\psi_{1,1,u}\circ\varphi \colon \Gamma_1 \to \Gamma_2$ maps $A_1$ to $A_2$, and
\begin{align*}
(\psi_{1,1,u}\circ\varphi)(B_1)
&= \psi_{1,1,u}(A_2^sB_2^c)
= A_2^s \big(B_2A_2^u\big)^c
\\ & 
= A_2^s \, A_2^{ur_2(1+r_2+\dots+r_2^{c-1})}\,B_2^c
	\qquad\text{(by Lemma~\ref{lem:(A^aB^b)^k})}
\\ & 
= B_2^c
	\qquad\text{(by \eqref{eq:u-isomorphism})}.
\end{align*}
Again, we may assume without loss of generality that $\varphi$ maps $A_1$ to $A_2$ and $B_1$ to $B_2^c$.

Finally, we compute
\begin{align*}
A_2^{r_1}
&= \varphi(A_1)^{r_1}
=\varphi(A_1^{r_1})
=\varphi(B_1A_1B_1^{-1})
\\ & 
=\varphi(B_1)\varphi(A_1)\varphi(B_1)^{-1}
=B_2^{c}A_2B_2^{-c}
\\ & 
= A_2^{r_2^{c}}B_2^c\, B_2^{-c}
	\qquad\text{(by Lemma~\ref{lem:(A^aB^b)^k})}
\\ & 
= A_2^{r_2^{c}}.
\end{align*}
Since $A_2$ has order $m$, it follows that $r_1 \equiv r_2^c \pmod m$, as claimed.
\end{proof}

\begin{remark}\label{rem:Z_m*cyclic}
If $m$ is a positive integer satisfying that $\Z_m^\times$ is cyclic, then two Type I groups of the form (if they exist) $\Gamma_{d}(m,n,r_1)$ and $\Gamma_{d}(m,n,r_2)$ are necessarily isomorphic. 
Indeed, one has that $\langle r_1\rangle=\langle r_2\rangle $ because $\Z_m^\times$ is cyclic, thus $r_1^c\equiv r_2\pmod m$ for some $c\in\Z$, and the isomorphism follows by  Proposition~\ref{prop:TypeI-uptoisomorphism}.

It is well known that $\Z_m^\times$ is cyclic if and only if $m=p^k$ or $m=2p^k$ for some odd prime integer $p$ (see e.g.\ \cite[Thm.~3, \S4.1]{IrelandRosen}).

\end{remark}

\section{Audible properties}\label{sec:main1}

This section is devoted to prove Theorem~\ref{thm:main1}. 
We first describe explicitly the set of orders of a Type I group, which is an spectral invariant (among spherical space forms) by the foundational work of Ikeda. 
We then use this explicit description to show some audible properties of the fundamental group of Type I of a spherical space form. 
At the end, we show that being of Type I is audible among spherical space forms.

\subsection{The set of orders}

For an arbitrary finite group $G$, let us denote by $\sigma(G)$ the set of orders of $G$, that is, 
\begin{equation}
\sigma(G)= \{k\in \mathbb{N}: \exists\, g \in G\text{ of order $k$}\}.
\end{equation}
The following audible properties (among spherical space forms) were obtained by Ikeda (see \cite[Cor.~2.4, 2.8]{Ikeda80_3-dimI}). 

\begin{proposition}\label{prop:invariantesIkeda}
If $S^q/G_1$ and $S^q/G_2$ are isospectral spherical space forms, then $|G_1|=|G_2|$ and $\sigma(G_1)=\sigma(G_2)$. 
\end{proposition}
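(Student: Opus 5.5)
We prove Proposition~\ref{prop:invariantesIkeda} through Ikeda's generating function for the spectrum. The spherical Laplacian has eigenvalues $k(k+q-1)$, $k\ge 0$, with eigenspaces $\mathcal H_k$ of harmonic homogeneous polynomials of degree $k$ on $\R^{q+1}$. The spectrum of $S^q/G$ is given by the multiplicities $\dim \mathcal H_k^G$. Since these eigenvalues are pairwise distinct, isospectrality of $S^q/G_1$ and $S^q/G_2$ is equivalent to equality of the generating functions
\[
F_G(z)=\sum_{k\ge0}\dim(\mathcal H_k^G)\,z^k .
\]
Molien's formula, together with the decomposition $\mathcal P_k=\mathcal H_k\oplus|x|^2\mathcal P_{k-2}$, gives
\[
F_G(z)=\frac{1-z^2}{|G|}\sum_{g\in G}\frac{1}{\det(\Id-zg)}.
\]
The first step is therefore to record that isospectrality forces $\sum_{g\in G_1}\det(\Id-zg)^{-1}/|G_1|=\sum_{g\in G_2}\det(\Id-zg)^{-1}/|G_2|$ as rational functions of $z$.

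Next we read off $|G|$ from the pole at $z=1$. The identity $g=\Id$ contributes $(1-z)^{-(q+1)}$. Every $g\ne\Id$ has no eigenvalue $1$, because the action is free, so its term is regular at $z=1$. Hence the coefficient of $(1-z)^{-(q+1)}$ is $1/|G|$, and comparing this coefficient gives $|G_1|=|G_2|$. (This also follows from the volume argument in the introduction.)

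For $\sigma(G)$ we analyze the poles at roots of unity. Group the sum by eigenvalue structure: the term for $g$ has a pole at $z=\zeta^{-1}$ exactly when $\zeta$ is an eigenvalue of $g$. Let $\zeta$ be a primitive $k$-th root of unity, and let $g$ have order exactly $k$. Freeness applied to the powers $g^{j}$ with $0<j<k$ implies that every eigenvalue of $g$ is a primitive $k$-th root of unity: if $\lambda$ were an eigenvalue of order $j<k$, then $g^j\neq\Id$ would have eigenvalue $1$. Consequently the terms with poles at primitive $k$-th roots of unity are exactly those of elements of order $k$.

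We then isolate these poles. Take the maximal pole order at $z=\zeta^{-1}$ among all terms, and consider the coefficient of the most singular part. This coefficient equals a sum over the elements $g$ of order $k$ in which $\zeta$ has maximal multiplicity $\mu$, with each such $g$ contributing
\[
\prod_{\lambda\ne\zeta}(1-\zeta^{-1}\lambda)^{-1}.
\]
The main obstacle is to rule out cancellation among these contributions. We handle it by summing over all primitive $k$-th roots $\zeta$ simultaneously, or equivalently by using that the full sum is invariant under the Galois group, since $g\mapsto g^{j}$ permutes the elements of order $k$ for $\gcd(j,k)=1$.

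If cancellation cannot be excluded directly, we instead use that $\sum_{g:\,\mathrm{ord}(g)=k}\det(\Id-zg)^{-1}$ is exactly the part of the partial-fraction decomposition supported at primitive $k$-th roots. Evaluating this part along the real direction, via $|1-z\lambda|$ with $z\to\zeta^{-1}$ radially, should make all leading contributions share a common sign, since the product over conjugate eigenvalue pairs is positive. With positivity in hand, $G$ has an element of order $k$ if and only if $F_G$ has a pole at a primitive $k$-th root of unity. Equality of the generating functions then yields $\sigma(G_1)=\sigma(G_2)$.
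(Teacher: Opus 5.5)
Your setup is sound: isospectrality gives equality of $S_G(z):=\sum_{g\in G}\det(\Id-zg)^{-1}$ up to the factor $|G|$. The pole at $z=1$ gives $|G_1|=|G_2|$, and freeness forces every eigenvalue of an element of order $k$ to be a primitive $k$-th root of unity. (The paper does not prove this proposition itself; it quotes it from Ikeda.)

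\textbf{The gap.} The step you call the main obstacle is never closed: nothing in the proposal rules out cancellation.
\begin{itemize}
\item Galois invariance only shows that the principal parts of $S_G$ at the different primitive $k$-th roots are conjugate to each other. So they vanish all together or not at all, but this says nothing about whether they vanish.
\item The positivity claim is false as stated. For an eigenvalue $\lambda\neq\zeta^{\pm1}$ one has $(1-\zeta^{-1}\lambda)(1-\zeta^{-1}\bar\lambda)=\zeta^{-1}(\zeta+\bar\zeta-\lambda-\bar\lambda)$, i.e.\ $\zeta^{-1}$ times a real number with the sign of $\operatorname{Re}\zeta-\operatorname{Re}\lambda$. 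For $k=7$ and $\zeta=e^{4\pi\mi/7}$, the pairs $e^{\pm2\pi\mi/7}$ and $e^{\pm6\pi\mi/7}$ contribute opposite signs. Hence the leading coefficients of different elements with the same maximal multiplicity need not be aligned.
\item Passing to $|1-z\lambda|$ along a radius is not a valid substitute. You cannot take absolute values termwise in a sum when cancellation in that sum is exactly the issue.
\end{itemize}

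\textbf{The fix.} The missing idea is one line, and you already had the right decomposition. Put $F_k(z):=\sum_{\mathrm{ord}(g)=k}\det(\Id-zg)^{-1}$.
\begin{itemize}
\item $F_k$ is a rational function vanishing at $\infty$, with poles only at primitive $k$-th roots of unity.
\item By uniqueness of partial fractions, $F_k$ equals the sum of the principal parts of $S_G=|G|F_G(z)/(1-z^2)$ at those points. So it is determined by the spectrum and $|G|$.
\item Evaluating at $z=0$ gives $F_k(0)=\#\{g\in G:\mathrm{ord}(g)=k\}$. Hence $F_k\not\equiv 0$ whenever $k\in\sigma(G)$.
\item A nonzero rational function vanishing at $\infty$ must have a pole.
\end{itemize}
This yields $\sigma(G_1)=\sigma(G_2)$. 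In fact it shows more: $G_1$ and $G_2$ have the same number of elements of each order.

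Alternatively, your positivity approach can be repaired for $k\ge 3$ by working only at $\zeta=e^{2\pi\mi/k}$. A suitable power of any element of order $k$ has $\zeta$ as an eigenvalue. Then $\cos(2\pi/k)>\operatorname{Re}\lambda$ for every primitive $k$-th root $\lambda\neq\zeta^{\pm1}$. With $q+1=2N$ and $\mu$ the maximal multiplicity of $\zeta$, the leading coefficient at $z=\zeta^{-1}$ is $(1-\zeta^{-2})^{-\mu}\zeta^{N-\mu}$ times a sum of positive reals, so it cannot vanish.
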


Clearly, in the situation in Proposition~\ref{prop:invariantesIkeda}, $G_1$ is cyclic if and only if $G_2$ is cyclic.

The next goal is to give an explicit expression for $\sigma(\Gamma_d(m,n,r))$.

\begin{lemma}\label{lem:ordenes}
Let $\Gamma_d(m,n,r)$ be a Type I group. 
Let $j$ be an integer satisfying $0\leq j<d$. 
Write $c=\gcd(j,d)$ and $u=\gcd(r^j-1,m)$.  
We have the following:
\begin{enumerate}
\item \label{item:m/u|sum(r^j)^k}
$\frac{m}{u}$ divides $\sum\limits_{k=0}^{d-1} r^{kj}$.

\item \label{item:gcd(r^j-1,m)=1}
$\gcd(r^j-1,m)=1$ 
if and only if $\sum\limits_{k=0}^{d-1}r^{jk}\equiv 0\pmod m$.

\item \label{item:gcd(j,d)=1=>gcd(r^j-1,m)=1}
If $\gcd(j,d)=1$, then $\sum\limits_{k=0}^{d-1}r^{jk}\equiv 0\pmod m$.

\item \label{item:gcd(r^c-1,m)=gcd(r^j-1,m)}
$\gcd(r^j-1,m)= \gcd(r^{c}-1,m)$. 

\item \label{item:gcd(m/u,u)=1}
$\gcd\big(\frac{m}{u},u\big)=1$. 
\end{enumerate}
\end{lemma}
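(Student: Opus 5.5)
The whole lemma rests on the hypothesis $\gcd(r-1,m)=1$ together with the identity
$$
(r^j-1)\sum_{k=0}^{d-1} r^{jk} = r^{jd}-1 \equiv 0 \pmod m ,
$$
which holds because $r$ has order $d$ in $\Z_m^\times$. I would prove the items in the order (1), (2), (4), (3), (5).

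\emph{Item (1).} Write $S_j=\sum_{k=0}^{d-1} r^{jk}$. Dividing the congruence $(r^j-1)S_j\equiv 0 \pmod m$ by $u=\gcd(r^j-1,m)$ gives $\frac{m}{u}\mid \frac{r^j-1}{u}\,S_j$. Since $\frac{r^j-1}{u}$ and $\frac{m}{u}$ are coprime, it follows that $\frac{m}{u}\mid S_j$.

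\emph{Item (2).} If $u=1$, then (1) gives $m\mid S_j$. For the converse, I would argue prime by prime. Let $p\mid m$ with $p\mid r^j-1$. Then $r^{jk}\equiv 1 \pmod p$ for all $k$, so $S_j\equiv d \pmod p$. I then need $p\nmid d$. Since $r\not\equiv 1\pmod p$ (because $\gcd(r-1,m)=1$), the order $e$ of $r$ mod $p$ is $>1$ and divides both $p-1$ and $d$. The cleanest route is to bound the order of $r$ modulo $p^{v_p(m)}$. That group $\Z_{p^a}^\times$ is cyclic of order $p^{a-1}(p-1)$, and the elements $\equiv 1\pmod p$ form its $p$-part. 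Hence $r^{d'}\equiv 1 \pmod{p^a}$ with $p\nmid d'$ would need $r\equiv 1 \pmod p$ to force a $p$-power order. More simply: $r$ decomposes as (an element of order dividing $p-1$) times (an element $\equiv 1 \pmod p$). In fact one also has $\gcd(n,m)=1$ and $d\mid n$, so $p\nmid d$ immediately. Then $S_j\equiv d\not\equiv 0 \pmod p$, which contradicts $m\mid S_j$. This gives (2).

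\emph{Item (4).} Apply the same prime-wise reasoning. For $p^a\,\|\, m$, the multiplicative order $\mathrm{ord}_{p^a}(r)$ divides $d$. Since $p\nmid d$, this order divides $p-1$, so $\langle r\rangle$ modulo $p^a$ is cyclic of order prime to $p$. Therefore $v_p(\gcd(r^j-1,m))$ is either $a$ or $0$, according to whether $\mathrm{ord}_{p^a}(r)\mid j$. Indeed, if $r^j\equiv 1 \pmod p$, then $r^j$ has order prime to $p$ and lies in the kernel of reduction mod $p$, a $p$-group, so $r^j\equiv 1\pmod{p^a}$. Because $\mathrm{ord}_{p^a}(r)\mid d$, the condition $\mathrm{ord}\mid j$ is equivalent to $\mathrm{ord}\mid \gcd(j,d)=c$. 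This proves (4) and shows that $u$ is a unitary divisor of $m$.

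\emph{Items (3) and (5).} For (3), if $c=1$, then (4) gives $u=\gcd(r-1,m)=1$, and (2) yields $m\mid S_j$. For (5), the description in (4) says that each prime $p\mid m$ contributes its full power $p^a$ either to $u$ or to $m/u$, never to both, so $\gcd(m/u,u)=1$.

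The main obstacle is the lifting step in (4): showing that $r^j\equiv1 \pmod p$ implies $r^j\equiv1\pmod{p^a}$. This uses that the order of $r$ is coprime to $m$, which follows from $d\mid n$ and $\gcd(n,m)=1$, together with the fact that the kernel of $\Z_{p^a}^\times\to\Z_p^\times$ is a $p$-group.
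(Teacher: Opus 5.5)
Your proposal is correct. Items (1) and (2) match the paper. In (2) the paper phrases the converse globally as $\gcd(r^j-1,m)\mid d$, while you argue prime by prime. You should delete the exploratory sentences in (2) about $\Z_{p^a}^\times$ being cyclic and ``$r^{d'}\equiv 1$ would force a $p$-power order'': they are garbled and unused, since $p\nmid d$ follows at once from $d\mid n$ and $\gcd(m,n)=1$.

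For (3)--(5) you take a genuinely different route.

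\textbf{The paper's route.} It stays global and works only with congruences and geometric sums:
\begin{enumerate}
\item For (3), it uses that $k\mapsto r^{jk}$ permutes $\langle r\rangle$ when $\gcd(j,d)=1$, so the sum is $\equiv (r^d-1)(r-1)^{*}\equiv 0 \pmod m$.
\item For (4), it uses $r^c-1\mid r^j-1$. It then shows that $\gcd(r^j-1,m)/\gcd(r^c-1,m)$ divides $d/c$, by expanding $\sum_{k<d/c}(r^j)^k$ as $d/c$ plus multiples of $r^j-1$.
\item For (5), it shows that a prime dividing both $u$ and $m/u$ would divide $d$.
\end{enumerate}

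\textbf{Your route.} You localize at each $p^a\,\|\,m$. The kernel of $\Z_{p^a}^\times\to\Z_p^\times$ is a $p$-group, while $r$ has order prime to $p$. Hence $r^j\equiv 1\pmod p$ already forces $r^j\equiv1\pmod{p^a}$. This gives the description
\[
u=\prod\{p^a : p^a\,\|\,m,\ \mathrm{ord}_{p^a}(r)\mid j\},
\]
from which the remaining items follow immediately:
\begin{enumerate}
\item (4) holds because $\mathrm{ord}_{p^a}(r)\mid d$.
\item (5) holds because $u$ is a unitary divisor of $m$.
\item (3) follows from (4) with $c=1$, which gives $u=\gcd(r-1,m)=1$, and then (2).
\end{enumerate}

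\textbf{Comparison.} Your approach explains why the statements hold: $u$ is assembled from whole prime-power components of $m$. Once the standard local fact is granted, it is shorter. The paper's approach never uses the group structure of $\Z_{p^a}^\times$, relying only on $\gcd(d,m)=1$ and telescoping identities, and it proves each item independently. Both ultimately rest on the same input, namely that the order $d$ of $r$ is coprime to $m$.
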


\begin{proof}
\eqref{item:m/u|sum(r^j)^k}
It follows immediately from
$ 
(r^j-1)\sum_{k=0}^{d-1}(r^{j})^k= r^{jd}-1\equiv 0 \pmod m.
$

\eqref{item:gcd(r^j-1,m)=1}
The forward implication follows by \eqref{item:m/u|sum(r^j)^k}. 
For the converse, 
\begin{equation}
\begin{aligned}
0&
\equiv \sum_{k=0}^{d-1}r^{jk}
\equiv d+\sum_{k=0}^{d-1} \big((r^{j})^k -1\big) 
\equiv d+\sum_{k=0}^{d-1} (r^j-1)\Big(\sum_{h=0}^{k-1} (r^{j})^h\Big) \pmod m
\end{aligned}
\end{equation}
forces $\gcd(r^j-1,m)\mid d$, thus 
$\gcd(r^j-1,m)\mid \gcd(d,m)$. 
However, $\gcd(d,m)=1$ since $d\mid n$ and $\gcd(m,n)=1$, which implies $\gcd(r^j-1,m)=1$. 

\eqref{item:gcd(j,d)=1=>gcd(r^j-1,m)=1}
If $\gcd(j,d)=1$, then $\langle r^j\rangle= \langle r\rangle$ in $(\Z/m\Z)^\times $, so
\begin{equation}
\begin{aligned}
\sum_{k=0}^{d-1} (r^j)^k&
\equiv \sum_{k=0}^{d-1} r^k 
\equiv (r^d-1)\, (r-1)^* 
\equiv 0 \pmod m,
\end{aligned}
\end{equation}
where $(r-1)^*$ is any inverse of $r-1$ modulo $m$. 
(Remember from Lemma~\ref{lem:Burnside1} that $\gcd(r-1,m)=1$.)

\eqref{item:gcd(r^c-1,m)=gcd(r^j-1,m)}
Since $r^j-1=(r^c)^{\frac{j}{c}}-1 =(r^c-1)\sum_{k=0}^{\frac{j}{c}-1}(r^c)^k$, we have that $r^c-1\mid r^j-1$, which gives $\gcd(r^c-1,m)\mid \gcd(r^j-1,m)$. 
Since $\langle r^j\rangle= \langle r^c\rangle$ in $\Z_m^\times $, we have that 
\begin{equation}\label{eqn:3.3}
\begin{aligned}
(r^c-1)\sum_{k=0}^{\frac{d}{c}-1} (r^j)^k
= (r^c-1)\sum_{k=0}^{\frac{d}{c}-1} (r^c)^k
= (r^c)^{\frac{d}{c}}-1
\equiv 0 \pmod m
.
\end{aligned}
\end{equation}
This implies that $\frac{m}{\gcd(r^c-1,m)} =\frac{m}{\gcd(r^j-1,m)} \frac{\gcd(r^j-1,m)}{\gcd(r^c-1,m)}$ divides
$$
\sum_{k=0}^{\frac{d}{c}-1} (r^j)^k
=\frac{d}{c}+\sum_{k=0}^{\frac{d}{c}-1} \big( (r^j)^k-1 \big)
=\frac{d}{c}+\sum_{k=0}^{\frac{d}{c}-1} (r^j-1)\sum_{h=0}^{k-1}(r^j)^h.
$$
Hence $\frac{\gcd(r^j-1,m)}{\gcd(r^c-1,m)}\mid \frac{d}{c}$, therefore $\frac{\gcd(r^j-1,m)}{\gcd(r^c-1,m)}\mid \gcd(\frac{d}{c},m)=1$, and the assertion follows.

\eqref{item:gcd(m/u,u)=1}
Suppose that $p$ is a prime divisor of $\gcd(\frac mu,u)$. 
Thus $p\mid \sum_{k=0}^{d-1}r^{jk}$ by \eqref{item:m/u|sum(r^j)^k} and
\begin{equation}
p\mid 
\sum_{k=0}^{d-1}r^{jk} - \sum_{k=0}^{d-1} (r^j-1) \sum_{h=0}^{k-1} (r^{j})^h
=\sum_{k=0}^{d-1}r^{jk} - \sum_{k=0}^{d-1} \big((r^j)^k-1\big)
=d. 
\end{equation}
It follows that $p\mid \gcd(d,u)$, which is a contradiction since $\gcd(d,u)\mid \gcd(d,m)=1$.
We thus conclude that $\gcd(u,\frac{m}{u})=1$. 
\end{proof}

The next result gives an explicit description of the set of orders of a Type I group.
The case when $d$ is prime was proved in \cite[Cor.~3.6]{Ikeda80_3-dimII}.

\begin{theorem}\label{thm:sigma}
The set of orders of a Type I group $\Gamma_d (m,n,r)$ is given by 
$$
\sigma\big(\Gamma_d (m,n,r)\big)
= \bigcup_{c\mid d} \, \big\{k\in \N: k\mid \gcd(r^c-1,m)\tfrac{n}{c}\big\}
.
$$
\end{theorem}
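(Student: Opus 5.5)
We will compute the order of an arbitrary element $g=A^aB^b$, with $0\le a<m$ and $0\le b<n$, and check that the resulting orders are exactly the divisors listed in the union.

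\textbf{The order of $g$.} Let $g=A^aB^b$, and let $j\in\{0,\dots,d-1\}$ be determined by $j\equiv b\pmod d$. By Lemma~\ref{lem:(A^aB^b)^k},
\[
g^k=A^{a(1+r^b+\dots+r^{(k-1)b})}B^{kb}.
\]
So $g^k=1$ if and only if two conditions hold:
\begin{itemize}
\item[(i)] $n\mid kb$;
\item[(ii)] $m\mid a\,S_k$, where $S_k=\sum_{i<k}r^{ib}$.
\end{itemize}
Set $n'=n/\gcd(b,n)$, the order of $B^b$; condition (i) says $n'\mid k$. For such $k$ we have $\gcd(b,n)\mid b$ and $d\mid n$, so the residues $r^{ib}$ run periodically with period the order of $r^j$, namely $d/c$ where $c=\gcd(j,d)=\gcd(b,d)$.

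Put $u=\gcd(r^j-1,m)=\gcd(r^c-1,m)$ by Lemma~\ref{lem:ordenes}\eqref{item:gcd(r^c-1,m)=gcd(r^j-1,m)}. Since $\gcd(u,m/u)=1$ by Lemma~\ref{lem:ordenes}\eqref{item:gcd(m/u,u)=1}, we split $\Z_m\simeq\Z_u\times\Z_{m/u}$ and analyze $S_k$ on each factor.
\begin{itemize}
\item Modulo $u$: $r^b\equiv1$, so $S_k\equiv k\pmod u$.
\item Modulo $m/u$: the element $r^b$ has $\gcd(r^b-1,m/u)=1$. The argument of Lemma~\ref{lem:ordenes}\eqref{item:m/u|sum(r^j)^k}--\eqref{item:gcd(r^j-1,m)=1} gives $S_k\equiv0\pmod{m/u}$ exactly when the order $d/c$ of $r^b$ modulo $m/u$ divides $k$. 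This holds automatically when $n'\mid k$, because $d/c$ divides $n'$.
\end{itemize}

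Combining (i) and (ii), $g^k=1$ if and only if $n'\mid k$ and $\frac{u}{\gcd(a,u)}\mid k$. Since $\gcd(u,n)=1$, the order of $g$ is
\[
\operatorname{ord}(g)=n'\cdot\frac{u}{\gcd(a,u)}.
\]
This divides $u\cdot n/\gcd(b,n)$, and $n/\gcd(b,n)$ divides $n/c$ because $c\mid\gcd(b,n)$. Hence $\operatorname{ord}(g)\mid \gcd(r^c-1,m)\,\tfrac nc$ with $c\mid d$, which proves the inclusion $\subseteq$.

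\textbf{The reverse inclusion.} Fix $c\mid d$ and a divisor $k$ of $u\,\frac nc$, where $u=\gcd(r^c-1,m)$. Because $\gcd(u,n)=1$, we can write $k=k_1k_2$ with $k_1\mid u$ and $k_2\mid n/c$. Take
\[
b=\frac{n}{k_2}\quad\text{adjusted to meet the condition } \gcd(b,d)=c.
\]
Concretely, we will choose $b=c\cdot\frac{n/c}{k_2}$ times a suitable unit, so that $B^b$ has order $k_2$. Then choose $a=u/k_1$. The formula above then gives order $k_1k_2$, provided $\gcd(b,d)=c$, or at least that the relevant $u$ computed from $b$ equals $\gcd(r^c-1,m)$.

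\textbf{Main obstacle.} The delicate point is arranging $\gcd(b,d)=c$ while $B^b$ has order exactly $k_2$. If $\gcd(b,d)$ comes out as some $c'$ with $c\mid c'$, then $\gcd(r^{c'}-1,m)$ may be larger than $u$, and we need to know it still contains the factor $k_1$. This does hold, since $r^c-1\mid r^{c'}-1$ gives $u\mid\gcd(r^{c'}-1,m)$. So taking $b=n/k_2$ directly, $\gcd(b,d)=c'$ is a multiple of $c$: indeed $k_2\mid n/c$ gives $c\mid n/k_2$, and $c\mid d$. Then $u'=\gcd(r^{c'}-1,m)$ is a multiple of $u$, hence of $k_1$, and we choose $a=u'/k_1$. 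The order of $A^aB^b$ is then $k_2\cdot k_1$, as required. The remaining care goes into verifying the periodicity claim modulo $m/u$ when $b$ is not reduced below $d$.
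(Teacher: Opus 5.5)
Your proof is correct, but it takes a different route from the paper's.

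\textbf{Your route.} You compute the exact order of every element,
\[
\operatorname{ord}(A^aB^b)=\frac{n}{\gcd(b,n)}\cdot\frac{u}{\gcd(a,u)},\qquad u=\gcd(r^{\gcd(b,d)}-1,m).
\]
You get it by splitting $\Z_m\simeq\Z_u\times\Z_{m/u}$, using Lemma~\ref{lem:ordenes}\eqref{item:gcd(m/u,u)=1}. Modulo $u$ the geometric sum is just $k$. Modulo $m/u$ the factor $r^b-1$ is a unit, so the sum vanishes as soon as $n/\gcd(b,n)$ divides $k$. Both inclusions then follow from this single formula.

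\textbf{The paper's route.} The paper never computes the order of a general element. It proves only the annihilation bound $(A^aB^{td+j})^{un/c}=1$ (Claim~\ref{claim:order(A^aB^td+j)^un/c=1}). It computes the exact order only for the elements $AB^c$ (Claim~\ref{claim:order(AB^c)=un/c}). It handles $\langle AB^d\rangle$ separately via Claim~\ref{claim:<AB^d>=?}. For $\supseteq$ it simply takes powers of $AB^c$.

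\textbf{Simplifying your reverse inclusion.} Your own formula gives $\operatorname{ord}(AB^c)=\frac{n}{c}\,\gcd(r^c-1,m)$ immediately. So you could realize every divisor as the order of a power of $AB^c$. That skips the detour through $b=n/k_2$, $a=u'/k_1$ and the divisibility $u\mid u'$, although that detour is also correct.

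\textbf{Minor loose ends.} The ``remaining care'' you flag at the end is not actually needed. Your order computation used only $r^b\equiv r^j\pmod m$ and the fact that $r^b$ has order $d/\gcd(b,d)$ modulo $m$, and both hold for every $b$. The case $b=n$, which arises when $k_2=1$, can be replaced by $b=0$.

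\textbf{What each approach buys.} Yours yields strictly more information, namely the exact order of every element. For instance, Remark~\ref{rem:elements-order|m} becomes immediate: the order divides $m$ iff $n\mid b$. The paper's approach is more economical, needing only an upper bound plus one exact order computation.
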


\begin{proof}
Let $A$ and $B$ denote the generators of $\Gamma_d (m,n,r)$ satisfying \eqref{eq:TypeIrelations}. 

\begin{claim}\label{claim:order(A^aB^td+j)^un/c=1}
$(A^aB^{td+j})^{\frac{un}{c}}=1$ for all $a,t\in\Z$, $j\in\Z_{\geq0}$, where $u=\gcd(r^j-1,m)$ and $c=\gcd(j,d)$.
\end{claim}

\begin{proof}
\renewcommand{\qedsymbol}{$\blacksquare$}
Let $a,t,j$ be integers with $j\geq0$. 
By Lemma~\ref{lem:(A^aB^b)^k}, 
$
(A^aB^{td+j})^{\frac{n}{c}}
= A^{a\sum_{k=0}^{n/c-1} (r^j)^k} B^{\frac{(td+j)n}{c}}
= A^{a\sum_{k=0}^{n/c-1} (r^j)^k}
$
since $B$ has order $n$.
Thus 
$
(A^aB^{td+j})^{\frac{un}{c}}
= A^{au\sum_{k=0}^{n/c-1} (r^j)^k}
$.
It remains to show that $\sum_{k=0}^{n/c-1} (r^j)^k$ is divisible by $\frac{m}{u}$. 
Since $r^j$ has order $\frac{d}{c}$, we have that
\begin{align*}
\sum_{k=0}^{n/c-1} (r^j)^k 
\equiv  \frac{n}{d}\sum_{k=0}^{d/c-1} (r^j)^k \pmod m
\quad\text{and}\quad 
\sum_{k=0}^{d-1} (r^j)^k 
\equiv c \sum_{k=0}^{d/c-1} (r^j)^k  \pmod m
.
\end{align*}
Lemma~\ref{lem:ordenes}\eqref{item:m/u|sum(r^j)^k} ensures $\frac{m}{u}\mid \sum_{k=0}^{d-1} (r^j)^k$.
It follows from the identity at the right that $\frac{m}{u}\mid \sum_{k=0}^{d/c-1} (r^j)^k$ since $\gcd(\frac{m}{u},c)=1$. 
The assertion follows from the identity at the left. 
\end{proof}

\begin{claim}\label{claim:order(AB^c)=un/c}
Let $c$ be a positive divisor of $d$. 
Then $AB^c$ has order $\tfrac{un}{c}$, where $u=\gcd(r^c-1,m)$. 
\end{claim}

\begin{proof}
\renewcommand{\qedsymbol}{$\blacksquare$}
Let us denote by $h$ the order of $AB^c$. 
By taking $j=c$, $t=0$, and $a=1$ in Claim~\ref{claim:order(A^aB^td+j)^un/c=1} we obtain that $h\mid \tfrac{un}{c}$.  
Lemma~\ref{lem:(A^aB^b)^k} implies that
$ 
1=(AB^c)^h=A^{1+r^c+\dots+r^{c(h-1)}} B^{hc}.
$ 
Remark~\ref{rem:Gamma_d(m,n,r)} ensures 
\begin{align*}
n\mid hc
\quad\text{and}\quad 
\sum_{k=0}^{h-1}r^{ck}\equiv 0\pmod m.
\end{align*}
Since $\tfrac nc\mid h$ and $\gcd(u,\frac{n}{c})=1$ (because $u\mid m$ and $\gcd(m,n)=1$), it remains to show that $u\mid h$. 

Since $r^c$ has order $\frac{d}{c}$ in $\Z_m^\times$, we have that 
\begin{equation}
\begin{aligned}
0&\equiv \sum_{k=0}^{h-1} (r^{c})^k 
\equiv \frac{hc}{d} \sum_{k=0}^{d/c-1} (r^{c})^k  
\equiv \frac{hc}{d} \left(\sum_{k=0}^{d/c-1} \big((r^{c})^k-1\big) +\frac{d}{c}\right)\pmod m
.
\end{aligned}
\end{equation}
Since $u\mid m$ and $u\mid (r^{c})^k-1=(r^c-1)\sum_{h=0}^{k-1}(r^c)^h$ for any $k$, we conclude that $u\mid \frac{hc}{d}\cdot\frac{d}{c}=h$, as claimed. 
\end{proof}

For each divisor $c$ of $d$, we set $S_c=\{k\in\N : k\mid \gcd(r^c-1,m)\tfrac{n}{c}\}$. 
We want to prove that $\mathcal S:=\bigcup_{c\mid d} \mathcal S_c$ coincides with $\sigma(\Gamma_d(m,n,r))$.

Let $k$ be an element in $S_c$ for some $c\mid d$. 
If $\gcd(r^c-1,m)=1$, then $k\mid \frac nc$, so $k\mid n$. 
Since $B$ has order $n$, $B^{\frac{n}{k}}$ has order exactly $k$, thus $k\in \sigma(\Gamma_d(m,n,r))$. 
If $u:=\gcd(r^c-1,m)\neq 1$, since $AB^c$ has order $\frac{un}{c}$ by Claim~\ref{claim:order(AB^c)=un/c}, then $k\in \sigma(\Gamma_d(m,n,r))$ because $(AB^c)^{\frac{un}{ck}}$ has order $k$. 

We now prove that $\sigma\big(\Gamma_d(m,n,r)\big)\subset \mathcal S$. 
Picking $c=d$ in Claim~\ref{claim:order(AB^c)=un/c}, we obtain that $AB^d$ has order $\frac{mn}{d}$. 
Thus the order of any element in $\langle AB^d\rangle$ divides $\frac{mn}{d}$, so it lies in $\mathcal S_d\subset \mathcal S$. 

We next look at the elements in $\Gamma_d(m,n,r) \smallsetminus \langle AB^d\rangle$.

\begin{claim}\label{claim:<AB^d>=?}
$\langle AB^d\rangle= \{A^aB^{td}: 0\leq a<m,\; 0\leq t<\frac{n}{d}\}$.
\end{claim}

\begin{proof}
\renewcommand{\qedsymbol}{$\blacksquare$}
The forward inclusion is clear.
Let us prove the reverse inclusion.

Let $a,t$ be integers satisfying $0\leq a<m$ and $0\leq t<\frac{n}{d}$. 
Since $m$ is relatively prime to $n/d$, there is $q\in\N$ such that $mq\equiv t-a\pmod{\frac nd}$. 
By Lemma~\ref{lem:(A^aB^b)^k}, 
\begin{align*}
(AB^d)^{a+mq}&
= A^{a+mq} B^{(a+mq)d}
= A^{a} B^{td}
\end{align*}
since $(a+mq)d\equiv td\pmod n$, showing that $A^aB^{td}$ lies in $\langle AB^d\rangle$.
\end{proof}

Claim~\ref{claim:<AB^d>=?} implies that any element in $\Gamma_{d}(m,n,d)\smallsetminus\langle AB^d\rangle$ can be written as $A^aB^{td + j}$ for some integers $a,t,j$ satisfying $0\leq a<m$, $0\leq t<\frac{n}{d}$, and $0<j<d$. 
Write $c=\gcd(j,d)$. 
Now, Claim~\ref{claim:order(A^aB^td+j)^un/c=1} yields the order of $A^aB^{td + j}$ divides $\gcd(r^c-1,m)\frac{n}{c}$, so it lies in $\mathcal S_c$. 
\end{proof}

\begin{remark}\label{rem:elements-order|m}
Let $\Gamma_d(m,n,r)$ be a Type~I group with generators $A$ and $B$ satisfying \eqref{eq:TypeIrelations}.
We claim that
$$
\langle A\rangle
=
\{\gamma \in \Gamma_d(m,n,r): \text{the order of $\gamma$ divides $m$}\}.
$$
The forward inclusion is clear.
We prove the reverse inclusion using the proof of Theorem~\ref{thm:sigma}.
Let $\gamma\in \Gamma_d(m,n,r)$ be such that $k:=\op{ord}(\gamma)$ satisfies $k\mid m$.

Suppose, for contradiction, that
$
\gamma \notin \langle AB^d\rangle.
$
By Claim~\ref{claim:<AB^d>=?} in the proof of Theorem~\ref{thm:sigma}, we have
$
\gamma=A^aB^{td+j}
$
for some integers $a,t,j$ satisfying $0\leq a<m$, $0\leq t<\frac{n}{d}$, and $0<j<d$.
Applying Lemma~\ref{lem:(A^aB^b)^k}, we obtain
\[
1=(A^aB^{td+j})^k
=
A^{a(1+r^{td+j}+\cdots+r^{(k-1)(td+j)})} B^{k(td+j)},
\]
which implies that $n \mid k(td+j)$.
This is a contradiction since $0<td+j<n$ and $\gcd(n,k)=1$ (because $k\mid m$ and $\gcd(m,n)=1$).

The previous paragraph showed that $\gamma\in \langle AB^d\rangle$. 
Since $\langle AB^d\rangle$ is a cyclic group of order $\frac{mn}{d}$ (see Claim~\ref{claim:order(AB^c)=un/c} in the proof of Theorem~\ref{thm:sigma}), and furthermore $k=\op{ord}(\gamma)\mid m$ and $\gcd(m,\frac{n}{d})=1$, it follows immediately that $\gamma$ lies in $\langle (AB^d)^{\frac nd}\rangle = \langle A\rangle$, as claimed. 
\end{remark}

\subsection{Hearing parameters}

As a consequence of the explicit description for $\sigma\big(\Gamma_d(m,n,r)\big)$ in Theorem~\ref{thm:sigma}, we next deduce that the parameters $(m,n,d)$ are audible in the following sense.

\begin{proposition}\label{prop:m_1=m_2,n_1=n_2,d_1=d_2}
Let $M_1$ and $M_2$ be isospectral spherical space forms with fundamental groups $G_1$ and $G_2$ of Type I respectively, say $G_i\simeq \Gamma_{d_i}(m_i,n_i,r_i)$ for $i=1,2$. 
Then, 
$m:=m_1=m_2$, $n_1=n_2$, $d:=d_1=d_2$, and 
\begin{equation*}
\gcd(r_1^c-1,m)=\gcd(r_2^c-1,m)
\qquad\text{for every positive divisor $c$ of $d$}.
\end{equation*}
\end{proposition}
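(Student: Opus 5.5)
The plan is to use only the two invariants of Proposition~\ref{prop:invariantesIkeda}, namely $|G_1|=|G_2|$ and $\sigma(G_1)=\sigma(G_2)$, together with the explicit description of $\sigma(\Gamma_d(m,n,r))$ in Theorem~\ref{thm:sigma}. The aim is to show that $m$, $n$, $d$ and every $u_c:=\gcd(r^c-1,m)$ (for $c\mid d$) can be read off from the pair $(|G|,\sigma(G))$ by formulas that do not involve $r$. Throughout, $\sigma(G)=\bigcup_{c\mid d}\mathcal S_c$ with $\mathcal S_c$ the divisors of $u_c\frac nc$. Two values are fixed at the ends: $u_1=1$ (Lemma~\ref{lem:Burnside1}) and $u_d=m$ (since $r^d\equiv1$). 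So $n$ and $\frac{mn}{d}$ both lie in $\sigma(G)$. I will use repeatedly that $\gcd(m,n)=1$, that $u_{c'}\mid u_c$ whenever $c'\mid c$, and that $u_c=m$ forces $c=d$.

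\emph{Step 1: recover $m$ and $n$.} Since $mn=|G|$ is audible, it suffices to recognise which primes of $|G|$ divide $m$. I would establish two facts. First, if $k\in\sigma(G)$ and $n\mid k$, then $k=n$. Indeed, $n\mid u_c\frac nc$ forces $c\mid u_c$, and $\gcd(c,m)=1$ then gives $c=1$ and $u_1=1$. Second, for a prime $\ell\mid m$ and any $k\in\sigma(G)$, one never has $\ell^{v_\ell(m)}\cdot k'\in\sigma(G)$ for every $k'\in\sigma(G)$ coprime to $\ell$; the witness is $k'=n$, by the first fact. Conversely, a prime $q\mid n$ with $q\nmid d$ has its full $q$-part in every $\frac nc$, so it is ``absorbing'' in $\sigma(G)$. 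The delicate primes are those $q\mid d$, and this is the step I expect to be the main obstacle. For these I would use the fixed point free condition of Lemma~\ref{lem:Burnside2} ($q\mid \frac nd$) to separate them from primes of $m$. The idea is to compare which prime-power products of the form $\ell^{a}q^{b}$ lie in $\sigma(G)$: for $q\mid n$ the full power $q^{v_q(n)}$ appears together with all of $n$, while for $\ell\mid m$ the full power $\ell^{v_\ell(m)}$ appears only together with $n/d$. If a purely combinatorial criterion proves elusive, the fallback is to characterise $n$ directly as the unique element of $\sigma(G)$, among its maximal elements, whose prime support is closed in the sense above, and then set $m=|G|/n$.

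\emph{Step 2: recover $d$.} Once $m$ is known, look at the elements of $\sigma(G)$ divisible by $m$. Such an element lies in some $\mathcal S_c$ with $m\mid u_c$, hence $c=d$. So the largest such element is $\frac{mn}{d}$, which gives $d=mn/\max\{k\in\sigma(G): m\mid k\}$.

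\emph{Step 3: recover $u_c$ for each $c\mid d$.} I claim
\[
u_c=\max\{u : u\mid m,\ u\tfrac nc\in\sigma(G)\}.
\]
The inequality ``$\geq$'' comes from $u_c\frac nc\in\mathcal S_c$. For ``$\leq$'', suppose $u\frac nc\in\mathcal S_{c'}$. Comparing $n$-parts gives $\frac nc\mid\frac n{c'}$, i.e.\ $c'\mid c$. Comparing $m$-parts gives $u\mid u_{c'}\mid u_c$, using $r^{c'}-1\mid r^c-1$. Since the right-hand side depends only on $(m,n,\sigma(G))$, Steps~1--3 give $m_1=m_2$, $n_1=n_2$, $d_1=d_2$ and $\gcd(r_1^c-1,m)=\gcd(r_2^c-1,m)$ for all $c\mid d$.
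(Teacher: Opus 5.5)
Your Steps~2 and~3 are correct once $m$ and $n$ are known. An element of $\sigma(G)$ divisible by $m$ must divide $u_d\frac nd=\frac{mn}{d}$. And $u\frac nc\mid u_{c'}\frac n{c'}$ forces $c'\mid c$ and $u\mid u_{c'}\mid u_c$.

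The gap is Step~1. It is the heart of the proposition, and you do not carry it out.
\begin{itemize}
\item Your two facts show only that $n$ is a maximal element of $\sigma(G)$ and that primes of $n$ not dividing $d$ are ``absorbing''. Primes dividing $d$ and primes dividing $m$ are both non-absorbing, so this test cannot tell them apart. For $\Gamma_8(85,16,2)$ no prime is absorbing at all, so the test recovers nothing.
\item The heuristic you offer for primes $q\mid d$ is false. In $\Gamma_8(85,16,2)$ one has $u_4=\gcd(2^4-1,85)=5$, so $20=5\cdot\frac{16}{4}\in\sigma(G)$. Thus the full $5$-part of $m$ occurs together with $4\nmid\frac nd=2$.
\item The fallback (``the unique maximal element whose prime support is closed'') is neither made precise nor shown to single out $n$.
\end{itemize}
Since Steps~2--3 take $m$ and $n$ as input, the argument is incomplete exactly where the fixed point free hypothesis has to enter.

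The missing idea is short.
\begin{itemize}
\item By Theorem~\ref{thm:sigma}, every divisibility-maximal $k\in\sigma(G)$ equals $u_c\frac nc$ for some $c\mid d$.
\item Then $\frac nc\mid k$ and $c\mid\frac{|G|}{k}=\frac{m}{u_c}\,c$, so $\gcd(\frac nc,c)$ divides $\gcd(k,\frac{|G|}{k})$.
\item If $c>1$, a prime $p\mid c$ divides $d$, hence divides $\frac nd$ by Lemma~\ref{lem:Burnside2}, hence divides $\frac nc$. So $\gcd(k,\frac{|G|}{k})>1$.
\item For $c=1$ one gets $k=n$, and $\gcd(n,m)=1$.
\end{itemize}
Hence $n$ is the unique maximal element $k$ of $\sigma(G)$ with $\gcd(k,|G|/k)=1$, and $m=|G|/n$.

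With this lemma your route goes through and gives intrinsic formulas for $n,m,d,u_c$ in terms of $(|G|,\sigma(G))$. The paper proceeds differently: it compares the two groups directly. It first forces $d_1=d_2$ from the maximal elements $\frac{m_in_i}{d_i}$, which is where it uses Lemma~\ref{lem:Burnside2} to get $u_i=m_i$. It then gets $m_1=m_2$ from $n_1\in\sigma(G_2)$.

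Incidentally, your Step~3 is more careful than the paper's final step. The paper asserts $u_1\frac n{c_1}=u_2\frac n{c_2}$, but membership in $\sigma(G_2)$ only gives divisibility. Your chain $u\mid u_{c'}\mid u_c$ is what closes that step.
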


\begin{proof}
Since $M_1$ and $M_2$ are isospectral, 
$\sigma(G_1)= \sigma(G_2)$ and $|G_1|= |G_2|$ by Proposition~\ref{prop:invariantesIkeda}. 
We have that $|G_i|=m_in_i$ for $i=1,2$ by Lemma~\ref{lem:Burnside1}, thus $m_1n_1=m_2n_2$.  

If $d_1=1$, $G_1$ is cyclic (see Remark~\ref{rem:Gamma_d(m,n,r)}), thus $G_2$ is also cyclic because $\sigma(G_1)= \sigma(G_2)$ and $|G_1|=|G_2|$, concluding $d_2=1$. 
We assume from now on that $d_i\neq 1$ for $i=1,2$.

By Theorem~\ref{thm:sigma}, for $i=1,2$, 
\begin{equation}\label{eq:sigma(Gamma_i)}
\sigma(G_i)=
\bigcup_{c_i\mid d_i}
\big\{
	k\in\N : k\mid \gcd(r_i^{c_i}-1,m_i)\tfrac{n_i}{c_i}
\big\}
.
\end{equation}
Taking $c_1=d_1$ in \eqref{eq:sigma(Gamma_i)}, we get that $m_1\frac{n_1}{d_1} \in \sigma(\Gamma_1)$. 
Thus $m_1\frac{n_1}{d_1} \in\sigma(\Gamma_2)$, so there is a positive divisor $c_2$ of $d_2$ such that 
$m_1\frac{n_1}{d_1} 
\mid \gcd(r_2^{c_2}-1,m_2)\tfrac{n_2}{c_2}$.
Moreover, since the only element in $\sigma(\Gamma_1)$ divided by $m_1\frac{n_1}{d_1}$ is $m_1\frac{n_1}{d_1} $ itself, we have 
\begin{equation}\label{eq:m1n1_in_sigma(Gamma_2)}
m_1\tfrac{n_1}{d_1} 
= \gcd(r_2^{c_2}-1,m_2)\tfrac{n_2}{c_2}
.
\end{equation}
Analogously, there is a positive divisor $c_1$ of $d_1$ such that 
\begin{equation}\label{eq:m2n2_in_sigma(Gamma_1)}
m_2\tfrac{n_2}{d_2} 
= \gcd(r_1^{c_1}-1,m_1)\tfrac{n_1}{c_1}
.
\end{equation}
Set $u_i=\gcd(r^{c_i}_i-1,m_i)$ 
for $i=1,2$. 

Substituting $m_1n_1=m_2n_2$ at the left in \eqref{eq:m1n1_in_sigma(Gamma_2)} and \eqref{eq:m2n2_in_sigma(Gamma_1)}, we obtain that 
$m_2c_2=d_1u_2$ and $m_1c_1=d_2u_1$ respectively, or equivalently
\begin{align}\label{eq:d1=q2c2}
d_1&=\tfrac{m_2}{u_2}c_2, &
d_2&=\tfrac{m_1}{u_1}c_1. 
\end{align} 
This implies that $c_1\mid d_2$, thus $\gcd(c_1,\tfrac{m_2}{u_2})\mid \gcd(d_2,m_2)=1$, getting $\gcd(c_1,\tfrac{m_2}{u_2})=1$. 
Now, $c_1\mid d_1=\tfrac{m_2}{u_2}c_2$ forces $c_1\mid c_2$.
Proceeding analogously we obtain $c_2\mid c_1$, deducing $c_1=c_2$.

Substituting \eqref{eq:d1=q2c2} into $m_1\tfrac{n_1}{d_1}d_1=m_2\tfrac{n_2}{d_2}d_2$ we obtain that  
$
m_1\tfrac{n_1}{d_1} \tfrac{m_2}{u_2}c_2
=m_2 \tfrac{n_2}{d_2}\tfrac{m_1}{u_1}c_1
$, thus 
\begin{equation}\label{eq:u_1n_1'=u_2n_2'}
u_1\frac{n_1}{d_1}=u_2\frac{n_2}{d_2}. 
\end{equation}
Since $\gcd(\tfrac{m_2}{u_2},u_2)=1$ by Lemma~\ref{lem:ordenes}\eqref{item:gcd(m/u,u)=1} and  $\gcd(\tfrac{m_2}{u_2},\tfrac{n_2}{d_2})\mid \gcd(m_2,n_2)=1$, we obtain that
\begin{equation}\label{eq:1=gcd(q_2,u_1n_1')}
1=\gcd(\tfrac{m_2}{u_2},u_2\tfrac{n_2}{d_2})
=\gcd(\tfrac{m_2}{u_2},u_1\tfrac{n_1}{d_1}). 
\end{equation}

We claim that $\tfrac{m_2}{u_2}=1$. 
Indeed, if $p$ is a prime divisor of $\tfrac{m_2}{u_2}$,  $p\mid d_1$ by \eqref{eq:d1=q2c2}, thus $p\mid \tfrac{n_1}{d_1}$ by Lemma~\ref{lem:Burnside2} since $G_1$ is a fixed point free Type~I group, obtaining $p\mid \gcd(\tfrac{m_2}{u_2},u_1\tfrac{n_1}{d_1})$ which contradicts \eqref{eq:1=gcd(q_2,u_1n_1')}. 
We thus have $\tfrac{m_2}{u_2}=1$, and analogously $\tfrac{m_1}{u_1}=1$. 
It follows from $u_i=m_i$ that $c_i=d_i$ for any $i=1,2$ because $r_i$ has order $d_i$ in $\Z_{m_i}^\times$, deducing 
$$
d:=d_1=d_2
.
$$

We next prove that $m_1=m_2$ and $n_1=n_2$. 
Taking $c_1=1$ in \eqref{eq:sigma(Gamma_i)} we get $n_1\in\sigma(\Gamma_1)$ because $\gcd(r_1-1,m_1)=1$ (see Lemma~\ref{lem:Burnside1}). 
Since $n_1\in \sigma(\Gamma_2)$, there is a positive divisor $c$ of $d$ such that $n_1 \mid u\frac{n_2}{c}$, where $u=\gcd(r^{c}_2-1,m_2)$.
Hence
$$
c\tfrac{n_1}{d}  \mid u\tfrac{n_2}{d} = u\tfrac{m_1}{m_2}\tfrac{n_1}{d}
\quad\Longrightarrow\quad 
c  \mid u\tfrac{m_1}{m_2}
\quad\Longrightarrow\quad 
c \mid m_1
$$
since $\gcd(c,u)\mid\gcd(d,m_2)=1$.
If $u\neq 1$, then $c\neq 1$, which contradicts to $\gcd(m_1,d)=1$. 
Hence $u=1$, and thus 
$$
c\tfrac{n_1}{d}\mid \tfrac{n_2}{d}
=\tfrac{m_1}{m_2}\tfrac{n_1}{d}
\quad\Longrightarrow\quad 
m_2c\mid m_1
.
$$
We conclude that $m_2\mid m_1$ because $\gcd(c,m_1)=1$. 
Analogously, we obtain that $m_1\mid m_2$, so we conclude that $m_1=m_2$, and therefore $n_1=n_2$. 

We have thus proved so far that $m:=m_1=m_2$, $n:=n_1=n_2$, $d=d_1=d_2$. 
It only remains to show that 
$\gcd(r_1^c-1,m)=\gcd(r_2^c-1,m)$ for every positive divisor $c$ of $d$. 

We fix a positive divisor $c_1$ of $d$. 
Write $u_1=\gcd(r_1^{c_1}-1,m)$. 
Theorem~\ref{thm:sigma} gives $u_1\frac{n}{c_1}\in \sigma(\Gamma_1)=\sigma(\Gamma_2)$, thus there exists a positive divisor $c_2$ of $d$ such that 
\begin{equation}\label{eq:u'n/c'=un/c}
u_1\frac{n}{c_1}= u_2\frac{n}{c_2}
,
\end{equation}
where $u_2=\gcd(r_2^{c_2}-1,m)$. 
Since $\gcd(u_1,\frac{n}{c_2})=1$ (because $\gcd(u_1,\frac{n}{c_2})\mid \gcd(m,n)$ and $\gcd(m,n)=1$), we deduce that $u_1\mid u_2$. 
Analogously, we obtain that $u_2\mid u_1$, which implies $u_1=u_2$, and also $c_1=c_2$ by \eqref{eq:u'n/c'=un/c}.
We conclude that  $\gcd(r_1^{c}-1,m)=\gcd(r_2^{c}-1,m)$ for all $c\mid d$. 
\end{proof}

\subsection{Hearing the type}

Theorem~\ref{thm:main1} follows as a combination of Proposition~\ref{prop:m_1=m_2,n_1=n_2,d_1=d_2} and the next result. 

\begin{proposition}
Let $S^{q}/G_1$ and $S^{q}/G_2$ be isospectral spherical space forms. 
If $G_1$ is of Type I, then $G_2$ is also of Type I. 
\end{proposition}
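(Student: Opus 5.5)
The plan is to use only the audible invariants of Proposition~\ref{prop:invariantesIkeda}, namely $|G_1|=|G_2|$ and $\sigma(G_1)=\sigma(G_2)$, together with the standard structure of Sylow subgroups of fixed point free groups from Wolf's book \cite[Ch.~5--6]{Wolf-book}. The key point is that being of Type~I can be detected by whether the full $2$-part of the group order occurs as an element order.

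First I recall the classical fact, due to Burnside and recorded in \cite[Thm.~5.3.2]{Wolf-book}, that a group $G$ acting freely on a sphere satisfies the following.
\begin{enumerate}
\item Every subgroup of order $p^2$ is cyclic, for every prime $p$.
\item Consequently, every Sylow $p$-subgroup with $p$ odd is cyclic.
\item The Sylow $2$-subgroup is either cyclic or generalized quaternion.
\end{enumerate}
Hence $G_2$, being the fundamental group of a spherical space form, fails to be of Type~I exactly when its Sylow $2$-subgroup $P_2$ is generalized quaternion, of order $2^k$ with $k\geq 3$.

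Now write $|G_1|=|G_2|=2^k N$ with $N$ odd. Since $G_1$ is of Type~I, its Sylow $2$-subgroup is cyclic of order $2^k$, so $2^k\in\sigma(G_1)$. (If $k=0$ there is nothing to prove, because then every Sylow subgroup of $G_2$ is odd and hence cyclic.)

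Suppose, for contradiction, that $P_2$ is generalized quaternion of order $2^k$ with $k\geq 3$. Every element of $G_2$ of $2$-power order lies in a conjugate of $P_2$. A generalized quaternion group of order $2^k$ has exponent $2^{k-1}$, so no element of $G_2$ has order $2^k$. Thus $2^k\notin\sigma(G_2)$, which contradicts $\sigma(G_1)=\sigma(G_2)$. Therefore $P_2$ is cyclic, all Sylow subgroups of $G_2$ are cyclic, and $G_2$ is of Type~I.

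The only step with real content is the structural input in the list above: that odd Sylow subgroups of fixed point free groups are cyclic and the $2$-Sylow is cyclic or generalized quaternion. I plan to quote this from \cite{Wolf-book} rather than reprove it. If a self-contained argument is preferred, it follows from the fact that a $p$-group in which every subgroup of order $p^2$ is cyclic has a unique subgroup of order $p$. Such a group is cyclic for $p$ odd, and cyclic or generalized quaternion for $p=2$. Everything else is a direct comparison of element orders.
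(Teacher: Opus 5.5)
Your proof is correct and is essentially the paper's argument: the full $2$-part $2^k$ of $|G_1|=|G_2|$ lies in $\sigma(G_1)$ because the Sylow $2$-subgroup of a Type~I group is cyclic, but it cannot lie in $\sigma(G_2)$ when the Sylow $2$-subgroups of $G_2$ are generalized quaternion. The only difference is organizational. You invoke Burnside's theorem once: every fixed point free group has cyclic odd Sylow subgroups and a cyclic or generalized quaternion Sylow $2$-subgroup. This disposes of Types~II--VI simultaneously. The paper instead splits via Wolf's classification into the solvable case and Types~V and~VI, where the Sylow $2$-subgroups have order $8$ and $16$, and repeats the same element-order comparison in each case.
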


\begin{proof}
We have that $G_1\simeq \Gamma_d(m,n,r)$ for some parameters $m,n,r,d$ as in Lemma~\ref{lem:Burnside1}.  
According to Wolf's classification of spherical space forms~\cite{Wolf-book}, the possible fundamental groups fall into six disjoint types, namely Types~I--VI.
The idea is to exploit the identities $|G_1|=|G_2|$ and $\sigma(G_1)=\sigma(G_2)$ valid by Proposition~\ref{prop:invariantesIkeda} to conclude that $G_2$ cannot be of Type II--VI. 

We assume first that $G_2$ is solvable, or equivalently, of Type I--IV (see \cite[Prop.~3.1]{Wolf01}). 
Suppose that $G_2$ is not of Type~I.  
Then, any $2$-Sylow subgroup of $G_2$ is isomorphic to a (non-cyclic) generalized quaternionic group of order $2^{u+1}$ for some $u\in\N$ (see \cite[\S6.1]{Wolf-book}). 
We have that $2^{u+1}$ divides $|G_2|=|G_1|=mn$, so $2^{u+1}\mid n$ because $m$ is necessarily odd (see Remark~\ref{rem:Gamma_d(m,n,r)}). 
It follows that there is an element in $G_1\simeq\Gamma_d(m,n,r)$ of order $2^{u+1}$, namely, the element corresponding to $B^{n/{2^{u+1}}}$, if $A,B$ denotes the generators of $\Gamma_d(m,n,r)$ satisfying \eqref{eq:TypeIrelations}.  
However, $\sigma(G_1)=\sigma(G_2)$ yields there is an element of order $2^{u+1}$ in $G_2$, which is a contradiction because any $2$-Sylow subgroup of $G_2$ is not cyclic. 

The group $G_2$ is not solvable if and only if $G_2$ is of Type~V--VI (see \cite[Prop.~3.2]{Wolf01}). 
Suppose $G_2$ is of Type~V, thus $G_2\simeq \Sigma\times\SL(2,\mathbb F_5)$, where $\Sigma$ is a fixed point free Type~I group satisfying $\gcd(|\Sigma|,30)=1$.
We have that $mn=|G_1|=|G_2|=120\, |\Sigma|$. 
Since $8=2^3$ divides $n$ (because $m$ is odd), $8\in\sigma(G_1)=\sigma(G_2)$, and this is a contradiction because any $2$-Sylow subgroup of $G_2$ is isomorphic to a $2$-Sylow of $\SL(2,\mathbb F_5)$ (because $|\Sigma|$ is odd), which is isomorphic to the quaternion group of order $8$. 

We end the proof assuming that $G_2$ is of Type~VI. 
We have that $G_2$ has a normal subgroup of index $2$ isomorphic to a group of Type V, say $\Sigma\times\SL(2,\mathbb F_5)$. 
Thus $mn=|G_1|=|G_2|=2\cdot 120\cdot |\Sigma|$, which gives in this case $16=2^4\mid n$.
Similarly as in the previous case, there is an element of order $16$ in $G_1$, and therefore also in $G_2$.
The last condition contradicts the fact the $2$-Sylow subgroups of $G_2$ are of order $16$ and not cyclic. 
\end{proof}

\section{Counterexamples}
\label{sec:contraejemplos}

This section is devoted to construct isospectral spherical space forms with non-isomorphic fundamental groups. 
After introducing fixed point free representations of a fixed point free Type~I group from \cite[\S5.5]{Wolf-book}, we give a construction in Theorem~\ref{thm:contraejemplos} which proves Theorem~\ref{thm:main2-contraejemplos}. 
The final subsection describes an algorithm to find such examples.

\subsection{Fixed point free representations}
For $\theta\in\R$, we write 
$
	R(\theta)=\left(\begin{smallmatrix}
		\cos(2\pi \theta) & \sin(2\pi \theta) \\
		-\sin(2\pi \theta) & \cos(2\pi \theta)
	\end{smallmatrix}\right).
$
The next result can be found in \cite[Thm.~5.5.10]{Wolf-book}. 

\begin{theorem} \label{thm:irreps}
Let $\Gamma_d(m,n,r)$ be a fixed point free Type I group (see Lemma~\ref{lem:Burnside1}) with cardinality $\geq3$, and let $A,B$ be its generators satisfying \eqref{eq:TypeIrelations}. 
Given integers $k$ and $l$ with $\gcd(k,m)=1=\gcd(l,n)$, let $\rho_{k,l}$ be the real representation of degree $2d$ of $\Gamma_d(m,n,r)$ determined by
\begin{align*}
\rho_{k,l}(A)&
=\begin{pmatrix}
		R(\frac{k}{m}) & 0 & \cdots & 0 \\
		0 & R(\frac{kr}{m}) & \cdots & 0 \\
		\vdots & \vdots & \ddots & \vdots \\
		0 & 0 & \cdots & R(\frac{kr^{d-1}}{m})
	\end{pmatrix}
	,&
\rho_{k,l}(B)&
=\begin{pmatrix}
		0&0& 1 &   \\
		\vdots&  \vdots & & \ddots & \\
		0&0&&&1\\
		& & 0  &  \cdots & 0 \\
		\multicolumn{2}{c}{\text{\raisebox{1.5ex}[0pt]{$R(\tfrac{l}{n/d})$}}}   & 0 & \cdots & 0
	\end{pmatrix}.
\end{align*}
Then, these representations are fixed point free and irreducible and, $\rho_{k,l}$ and $\rho_{k',l'}$ are equivalent if and only if there are  $\epsilon\in\{\pm1\}$ and $c\in\{0,\dots,d-1\}$ such that 
\begin{equation}\label{eq:irrep-equivalentes}
k'\equiv \epsilon kr^c\pmod m
\qquad\text{ and }\qquad
l'\equiv \epsilon l\pmod{n/d} 
.
\end{equation}
Moreover, any real representation of $\Gamma_d(m,n,r)$ is fixed point free if and only if it is equivalent to a sum of representations $\rho_{k,l}$ as above. 	
\end{theorem}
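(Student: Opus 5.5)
We will verify the relations, compute eigenvalues to get freeness, use Mackey/Clifford theory for irreducibility and the equivalence criterion, and finally use semisimplicity for the last assertion.

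\textbf{Well-definedness.} We check that $\rho_{k,l}(A)$ and $\rho_{k,l}(B)$ satisfy \eqref{eq:TypeIrelations}. Clearly $\rho_{k,l}(A)^m=\Id$. The matrix $\rho_{k,l}(B)$ cyclically shifts the $d$ two-dimensional blocks and inserts a factor $R(\tfrac{l}{n/d})$ once per cycle. Hence
\[
\rho_{k,l}(B)^d=\op{diag}\big(R(\tfrac{l}{n/d}),\dots,R(\tfrac{l}{n/d})\big),
\]
so $\rho_{k,l}(B)^n=\Id$. Conjugating $\rho_{k,l}(A)$ by $\rho_{k,l}(B)$ permutes the blocks $R(kr^i/m)$ to $R(kr^{i+1}/m)$, where $R(kr^d/m)=R(k/m)$ because $r^d\equiv1\pmod m$, and the rotation $R(\tfrac{l}{n/d})$ commutes with the rotations. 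This gives $\rho_{k,l}(B)\rho_{k,l}(A)\rho_{k,l}(B)^{-1}=\rho_{k,l}(A)^r$, with the block ordering chosen to match the convention $BAB^{-1}=A^r$.

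\textbf{Freeness.} We show that no nontrivial element has eigenvalue $1$. By Claim~\ref{claim:<AB^d>=?}, elements of $\langle AB^d\rangle$ have the form $A^aB^{td}$ and act blockwise diagonally by $R(\tfrac{akr^i}{m}+\tfrac{tl}{n/d})$. Since $\gcd(m,n/d)=1$, $\gcd(k,m)=1=\gcd(l,n)$ and $r$ is a unit, this rotation is the identity only when $m\mid a$ and $\tfrac nd\mid t$. For an element $g=A^aB^{td+j}$ with $0<j<d$, some power $g^{s}$ lies in $\langle AB^d\rangle$ and is nontrivial; the proof of Claim~\ref{claim:order(A^aB^td+j)^un/c=1} shows $B^{(td+j)s}\neq1$ for suitable $s$. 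The condition of Lemma~\ref{lem:Burnside2}, that every prime divisor of $d$ divides $n/d$, is exactly what ensures that the $B$-component of that power is nontrivial. An eigenvalue $1$ of $g$ would give an eigenvalue $1$ of $g^s$, contradicting the first case. The delicate point is choosing $s$ with $\gcd(s,\mathrm{ord}(g))$ controlled so that the fixed vector persists and $g^s\neq 1$; we would take $s$ to be a prime-power part of the order of $g$.

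\textbf{Irreducibility and equivalence.} Consider the normal abelian subgroup $N=\langle A,B^d\rangle$. Over $\C$, its characters $\chi_{i}(A)=\zeta_m^{kr^i}$, $\chi_i(B^d)=\zeta_{n/d}^{l}$ for $i=0,\dots,d-1$ are pairwise distinct, since $r$ has order $d$. Their conjugates under complex conjugation, given by $k\mapsto -k$, $l\mapsto-l$, are also among these characters or not depending on the parameters. The representation $\rho_{k,l}\otimes\C$ is induced from $\chi_0$, and since $\Gamma/N\simeq\Z_d$ acts with trivial stabilizers on $\chi_0$, Mackey's criterion gives that the complexification is irreducible, or is the sum of two non-real conjugate irreducibles. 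Either way $\rho_{k,l}$ is real-irreducible. Two induced representations are equivalent exactly when their $N$-character orbits meet, up to complex conjugation. This is the condition \eqref{eq:irrep-equivalentes}, with $\epsilon$ accounting for conjugation and $r^c$ for the $\Gamma/N$-orbit.

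\textbf{Last assertion.} Any representation is a sum of real irreducibles, and it is fixed point free iff each summand is (an eigenvalue $1$ of $\rho(g)$ lives in some summand), with faithfulness automatic once the summands are free. So it suffices to show that every fixed-point-free real irreducible is some $\rho_{k,l}$. Such an irreducible restricted to $A$ must have no eigenvalue $1$, so its $N$-characters have $\chi(A)$ primitive of order $m$. A similar argument on $B^d$ gives $\chi(B^d)$ of order $n/d$, using freeness of the powers $B^{td}$ and of elements of the form $(AB^d)^{\cdot}$. Clifford theory then identifies the irreducible as an induced $\rho_{k,l}$.

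\textbf{Main obstacle.} I expect the main obstacle to be this last classification, together with the freeness argument for elements outside $N$. Both are where Lemma~\ref{lem:Burnside2} genuinely enters.
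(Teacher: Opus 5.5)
The paper gives no proof of this statement; it quotes \cite[Thm.~5.5.10]{Wolf-book}. Your route is essentially Wolf's: take characters of the normal abelian subgroup $N=\langle A,B^d\rangle$, induce, then pass to real representations. Your check of the relations, the equivalence criterion (complexifications and $N$-orbits up to conjugation), and the reduction of the last assertion to irreducible summands are fine.

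\textbf{The gap: real irreducibility.} Each $2\times2$ block splits along the $(1,\pm\mi)$-eigenlines, which are the same for every $R(\theta)$ and are preserved by $\rho_{k,l}(B)$. This gives $\rho_{k,l}\otimes\C\simeq\pi\oplus\bar\pi$ with $\pi=\operatorname{Ind}_N^{\Gamma}\chi_0$ of dimension $d$. So $\rho_{k,l}\otimes\C$ is not $\operatorname{Ind}_N^\Gamma\chi_0$, and it is never irreducible. There are two cases:
\begin{itemize}
\item $\pi\not\simeq\bar\pi$: your conclusion holds.
\item $\pi\simeq\bar\pi$: then $\rho_{k,l}\otimes\C\simeq 2\pi$, and $\rho_{k,l}$ is irreducible if and only if $\pi$ is of quaternionic rather than real type.
\end{itemize}
You never treat the second case, and it does occur. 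Indeed $\bar\chi_0$ lies in the orbit of $\chi_0$ if and only if $-1\in\langle r\rangle\subset\Z_m^\times$ and $\frac nd\mid 2$, e.g.\ for the binary dihedral groups $\Gamma_2(m,4,-1)$. A telltale sign is that the hypothesis $|\Gamma|\ge3$ is never used. For $\Gamma=\Z_2$ ($m=1$, $n=2$) one gets $\rho_{k,l}(B)=-\Id_2$, which is reducible, and your argument would not detect this.

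\textbf{How to close it.} Compute the Frobenius--Schur indicator. In the self-conjugate non-cyclic case:
\begin{itemize}
\item Lemma~\ref{lem:Burnside2} forces $n=2d$ with $d$ a power of $2$ and $d\ge2$.
\item $-1\equiv r^c$ forces $c\equiv \frac d2 \pmod d$, i.e.\ $r^{d/2}\equiv-1\pmod m$.
\item For $g=A^aB^b$ one has $g^2=A^{a(1+r^b)}B^{2b}$, and $\chi_\pi$ vanishes off $N$. So only $b\in\{0,\frac d2,d,\frac{3d}2\}$ contribute to $\sum_g\chi_\pi(g^2)$.
\item For $b\in\{0,d\}$, summing $\chi_\pi(A^{2a})=\sum_i\xi_m^{2akr^i}$ over $a$ gives $0$, since $m>1$ is odd.
\item For $b\in\{\frac d2,\frac{3d}2\}$, one gets $g^2=B^d$ and $\chi_\pi(B^d)=d(-1)^l=-d$.
\end{itemize}
Hence $\nu(\pi)=-2dm/|\Gamma|=-1$, so $\pi$ is quaternionic and $\rho_{k,l}$ is irreducible. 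In the cyclic case, $\pi\simeq\bar\pi$ forces $n\le 2$, which is exactly what $|\Gamma|\ge3$ excludes.

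Your classification paragraph depends on this too. A fixed point free real irreducible $\sigma$ is a priori only a summand of the realification $\rho_{k,l}$ of a constituent of $\sigma\otimes\C$. It equals $\rho_{k,l}$ only because $\rho_{k,l}$ is irreducible. Also, $l$ is a priori known only modulo $\frac nd$; it can be taken prime to $n$ by Lemma~\ref{lem:Burnside2}.

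\textbf{A smaller point in the freeness step.} A vector fixed by $g$ is fixed by every power, so nothing is delicate there. What you need is a power $g^s\in N\smallsetminus\{1\}$. Neither the claim you cite from the proof of Theorem~\ref{thm:sigma} (it shows $g^{un/c}=1$) nor ``a prime-power part of the order'' supplies one. For $g=A^aB^b$ with $d\nmid b$, put $c=\gcd(b,d)$ and $s=d/c$. Then $g^s=A^xB^{d\,b/c}\in N$ for some $x$. If $B^{d\,b/c}=1$, then $\frac nd\mid\frac bc$. This is impossible: any prime $p\mid\frac dc$ divides $\frac nd$ by Lemma~\ref{lem:Burnside2}, but $p$ does not divide $\frac bc$.
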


A spherical space form is called \emph{irreducible} when the defining representation of the fixed point free group is irreducible. 
The next observation ensures that $k$ above can be always assumed equal to $1$ for irreducible spherical space forms of Type I. 

\begin{remark}\label{rem:k=1}
We continue under the notation of Theorem~\ref{thm:irreps}. 
Let $s,t,u$ be integers satisfying $\gcd(s,m)=1=\gcd(t,n)$ and $t\equiv 1\pmod d$.
The automorphism $\psi_{s,t,u}$ of $\Gamma_{d}(m,n,r)$ defined in \eqref{eq:Aut(Gamma)} satisfies that $\rho_{k,l}\circ\psi_{s,t,u}$ is equivalent to $\rho_{sk,tl}$ (see \cite[Thm.~5.5.6]{Wolf-book}).
In particular $S^{2d-1}/\rho_{k,l}(\Gamma_d(m,n,r))$ and $S^{2d-1}/\rho_{sk,tl}(\Gamma_d(m,n,r))$ are isometric. 

For this reason, taking $s$ to be the inverse of $k$ modulo $m$, we may assume that $k=1$.
\end{remark}

\subsection{Isospectral construction}
The main goal of this subsection is to prove that two fixed point free Type I groups $\Gamma_1:=\Gamma_d(m,n,r_1)$ and $\Gamma_2:=\Gamma_d(m,n,r_2)$ satisfying $n=2d$ and $r_1r_2\equiv-1\pmod m$ induce pairs of isospectral spherical space forms.
Moreover, one can pick the parameters in such a way that the groups are not isomorphic. 

Given $A\in\Ot(q+1)$, we write $\Eig(A)$ for the multiset of $q+1$ complex eigenvalues of the matrix $A$. 

\begin{proposition}\label{prop:almost-conjugate}
Let $S^{q}/G_1$ and $S^{q}/G_2$ be two spherical space forms. 
If there is a bijection $\phi:G_1\to G_2$ satisfying $\Eig(g)=\Eig(\phi(g))$ for all $g\in G_1$, then $S^{q}/G_1$ and $S^{q}/G_2$ are strongly isospectral, in particular, $\Spec\big(S^{q}/G_1\big) = \Spec\big(S^{q}/G_2\big)$.
\end{proposition}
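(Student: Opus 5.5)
This is the classical Sunada-type argument in the form used by Ikeda and Gilkey. The plan is to realize both quotients through the same ambient representation of $\Ot(q+1)$ and then compare multiplicities.

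\emph{Step 1 (decomposition).} Let $H=\Ot(q+1)$ and $K=\Ot(q)$, the stabilizer of a point, so that $S^q=H/K$. A natural vector bundle over $S^q$, restricted to the natural operators we consider, is a homogeneous bundle $E_\tau=H\times_K V_\tau$, and a natural operator $D$ is $H$-equivariant. Sections of $E_\tau$ over $S^q/G$ are the $G$-invariant sections upstairs. By Frobenius reciprocity and Peter--Weyl,
\[
L^2(E_\tau)=\widehat{\bigoplus_{\pi\in\widehat H}} V_\pi\otimes \op{Hom}_K(V_\pi,V_\tau),
\]
with $H$ acting on the first factor. The operator $D$ preserves each isotypic piece, acting through the second factor; here we use finite multiplicities, which follow from ellipticity, or one simply decomposes into eigenspaces. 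Each eigenspace $W_\lambda$ of $D$ on $S^q$ is therefore a finite-dimensional $H$-representation, and the $\lambda$-eigenspace on $S^q/G$ is $W_\lambda^G$.

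\emph{Step 2 (counting invariants).} For a finite-dimensional representation $(\pi,W)$ of $H$ and a finite subgroup $G\subset H$,
\[
\dim W^G=\frac1{|G|}\sum_{g\in G}\chi_\pi(g).
\]
The key point is that $\chi_\pi(g)$ depends only on the conjugacy class of $g$ in $H$. For $g\in\Ot(q+1)$, this class is determined by $\Eig(g)$: real orthogonal matrices are conjugate in $\Ot(q+1)$ exactly when they have the same eigenvalue multiset, by the normal form into rotation blocks $R(\theta)$ and $\pm1$'s.

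\emph{Step 3 (conclusion).} Since $\phi$ is a bijection with $\Eig(g)=\Eig(\phi(g))$, each $g$ is $H$-conjugate to $\phi(g)$. This gives $\sum_{g\in G_1}\chi_\pi(g)=\sum_{g\in G_2}\chi_\pi(g)$, and also $|G_1|=|G_2|$. Hence $\dim W_\lambda^{G_1}=\dim W_\lambda^{G_2}$ for every eigenvalue $\lambda$ of every natural operator, which is strong isospectrality. Taking $D=\Delta$ on functions gives $\Spec(S^q/G_1)=\Spec(S^q/G_2)$.

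The only delicate point is Step 1's formalization of ``natural operator'': we have to make sure natural operators are $\Ot(q+1)$-equivariant and descend compatibly to the quotient. For the Laplacian itself no such care is needed, since $\Delta$ commutes with all isometries. Its eigenspaces on $S^q$ are the spaces of harmonic polynomials, which are $\Ot(q+1)$-modules, so Steps 2–3 apply verbatim.
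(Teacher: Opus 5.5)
Your argument is correct. It is the Sunada/DeTurck--Gordon argument, in B\'erard's representation-theoretic form: equal $\Eig$ gives $\Ot(q+1)$-conjugacy, hence $\dim W^{G_1}=\dim W^{G_2}$ for every finite-dimensional $\Ot(q+1)$-module $W$. The paper does not prove this proposition itself but cites it from \cite{Wolf01}, which rests on that same method, and for the Laplace spectrum alone the paper's formula \eqref{eq:F_G(z)} gives the result at once, since $F_G(z)$ depends only on $|G|$ and the multisets $\Eig(g)$, $g\in G$.
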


Two arbitrary subgroups $G_1$ and $G_2$ of $\Ot(q+1)$ satisfying the hypothesis in Proposition~\ref{prop:almost-conjugate} are called \emph{almost conjugate}, since the bijection $\phi$ preserves conjugacy classes in $\Ot(q+1)$.
The result above can be found in~\cite[Cor.~2.13]{Wolf01}; it extends Sunada's method as developed by DeTurck and Gordon~\cite{DeTurckGordon89} and was later interpreted in Lie-theoretic terms by Bérard~\cite{Berard93}.
Under the same hypotheses, Ikeda~\cite[Cor.~2.3]{Ikeda80_3-dimI} had previously proved isospectrality.

We are now in position to prove Theorem~\ref{thm:main1}, which immediately follows from the next result. 
As a convention, for each $i=1,2$, we denote by $\rho_{k,l}^{(i)}$ the irreducible fixed point free representation of $\Gamma_i$ defined in Theorem~\ref{thm:irreps}.

\begin{theorem}\label{thm:contraejemplos}
Let $\Gamma_1=\Gamma_{d}(m,n,r_1)$ and $\Gamma_2=\Gamma_{d}(m,n,r_2)$ be non-cyclic fixed point free Type I groups. 
If $n=2d$ and $r_1r_2\equiv-1\pmod m$, then 
\begin{align*}
S^{2dp-1}/\big(\rho_{k_1,l_1}^{(1)}\oplus\dots\oplus \rho_{k_p,l_p}^{(1)}\big)(\Gamma_1) 
\quad\text{and}\quad
S^{2dp-1}/\big(\rho_{k_1,l_1}^{(2)}\oplus\dots\oplus \rho_{k_p,l_p}^{(2)}\big)(\Gamma_2)
\end{align*}
are strongly isospectral for all integers $k_1,l_1,\dots,k_p,l_p$ satisfying $\gcd(k_j,m)=1=\gcd(l_j,n)$ for all $j$. 
\end{theorem}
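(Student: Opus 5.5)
The strategy is to apply Proposition~\ref{prop:almost-conjugate}. Write $\rho^{(i)}=\rho_{k_1,l_1}^{(i)}\oplus\dots\oplus\rho_{k_p,l_p}^{(i)}$ and let $A_i,B_i$ be the generators of $\Gamma_i$. I will build a bijection $\phi:\Gamma_1\to\Gamma_2$ with $\phi(A_1^aB_1^b)=A_2^{a'}B_2^{b}$, keeping $b$ fixed and permuting the exponent $a$ suitably inside each fibre $b=\mathrm{const}$. I then check $\Eig(\rho^{(1)}(g))=\Eig(\rho^{(2)}(\phi(g)))$ summand by summand, which reduces to a single $\rho_{k,l}$.

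\emph{Structural observations.} By Lemma~\ref{lem:Burnside2}, every prime divisor of $d$ divides $n/d=2$, so $d$ is a power of $2$. Also each $l_j$ is odd, so $\rho_{k,l}^{(i)}(B_i^d)=\op{diag}(R(l/2),\dots)=-\Id$ for both groups. Finally $r_2\equiv -r_1^{-1}$, so $r_2^c\equiv(-1)^c r_1^{-c}\pmod m$, which gives
\[
\pm\langle r_1\rangle=\pm\langle r_2\rangle \subset \Z_m^\times .
\]

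\emph{Case $b=td$.} Here $\rho_{k,l}^{(i)}(A_i^aB_i^{td})=(-1)^t\op{diag}\big(R(\tfrac{akr_i^{s}}{m})\big)_{s=0}^{d-1}$. Its eigenvalues are $(-1)^te^{\pm2\pi \mi akr_i^s/m}$ for $0\le s<d$. The multiset $\{\pm r_2^s\}_{s}=\{\pm r_1^{-s}\}_s=\{\pm r_1^{s}\}_s$ by the observation above, so the identity on $a$ works: $a'=a$.

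\emph{Case $b=td+j$ with $0<j<d$.} Let $c=\gcd(j,d)$ and $L=d/c$. The matrix $\rho_{k,l}(A^aB^b)$ is a block-weighted permutation whose block permutation is a shift by $j$ on $d$ blocks, which has $c$ cycles of length $L$. On each cycle, the $L$-th power of the matrix restricted to the cycle's blocks is a rotation $R(\theta)$ on each block, and the eigenvalues on that cycle are exactly the $L$-th roots of $e^{\pm2\pi\mi\theta}$. Hence $\Eig(g)$ is determined by $j$ together with $\Eig(g^L)$.

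By Lemma~\ref{lem:(A^aB^b)^k}, $g^L=A^{aS_i}B^{bL}$ with $S_i=\sum_{k=0}^{L-1}r_i^{bk}$, and $bL$ is a multiple of $d$. So $\Eig(g^L)$ is computed by the previous case, as $\pm$ the numbers $aS_ikr_i^s$. I will show that $\gcd(S_i,m)=m/u_i$ with $u_i=\gcd(r_i^c-1,m)$, using Lemma~\ref{lem:ordenes} and the computation in Claim~\ref{claim:order(AB^c)=un/c}. Then, as $a$ runs over $\Z_m$, the value $aS_i$ runs over multiples of $m/u_i$, each hit exactly $m/u_i$ times.

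It remains to see $u_1=u_2$. If $c$ is even, $r_2^c=r_1^{-c}$ and the two gcd's agree. If $c$ is odd, then $c=1$ because $d$ is a power of $2$, and both gcd's equal $1$ by Lemma~\ref{lem:Burnside1}. Therefore I can choose a bijection $a\mapsto a'$ on $\Z_m$ with $a'S_2\equiv aS_1\pmod m$. Using again $\pm\langle r_1\rangle=\pm\langle r_2\rangle$, the multisets $\Eig(g^L)$ and $\Eig(\phi(g)^L)$ coincide, and hence so do $\Eig(g)$ and $\Eig(\phi(g))$. Since $a\mapsto a'$ depends only on $b$, the same $\phi$ works for every summand simultaneously.

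The step I expect to need the most care is this last one: establishing $\gcd(S_i,m)=m/u_i$ with $u_1=u_2$, and checking that reading off the eigenvalues of $g$ from $g^L$ is legitimate when the rotation angle is $0$ or $\tfrac12$. In that degenerate situation I will simply compare characteristic polynomials $\prod(x^L-\lambda)$ over the eigenvalues $\lambda$ of the cycle product, which depend only on $\Eig(g^L)$ restricted to one block per cycle.
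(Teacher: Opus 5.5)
Your proof is correct, and it reaches the eigenvalue comparison by a genuinely different route from the paper.

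\textbf{The paper's route.} It fixes the naive bijection $A_1^aB_1^b\mapsto A_2^aB_2^b$ and reduces to $\rho_{1,1}$. It then imports Ikeda's explicit factorization of the characteristic polynomial of $\pi_{1,1}(A^aB^b)$, namely the polynomials $P_{a,b}^{(i)}$ built from $\alpha^{(i)}(b)$. Next it proves $\gcd(r_1^q-1,m)=\gcd(r_2^q-1,m)$ and $\alpha^{(1)}(b)\equiv\alpha^{(2)}(b)\pmod m$. Finally it matches the factors of $P^{(1)}_{a,b}\bar P^{(1)}_{a,b}$ and $P^{(2)}_{a,b}\bar P^{(2)}_{a,b}$ by a sign-alternating reindexing of the roots of unity.

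\textbf{Your route.} You compute the spectrum yourself.
\begin{itemize}
\item The cycle decomposition of the block shift by $j$ gives $\chi_g(x)=Q(x^L)$, with $Q$ determined by $\Eig(g^L)$. This also disposes of the degenerate angles $0$ and $\tfrac12$.
\item The power $g^L=\rho(A^{aS}B^{bL})$ falls into the block-diagonal case. There everything reduces to the multiset identity $\{\pm r_1^s\}_s=\{\pm r_2^s\}_s$ and the common real twist $\rho(B)^d=-\Id$. That twist carries the sign $(-1)^{bL/d}$, which is identical on both sides.
\item Because you allow a permutation of $a$ in each fibre, you only need $\gcd(S_1,m)=\gcd(S_2,m)$. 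This indeed follows from $m/u_i\mid S_i$, from $S_i\equiv L\pmod{u_i}$ together with Lemma~\ref{lem:ordenes}\eqref{item:gcd(m/u,u)=1}, and from your analogue of the paper's first claim giving $u_1=u_2$.
\end{itemize}

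\textbf{Comparison.} Your version is self-contained: it makes no appeal to Ikeda's formula or to the splitting $\rho\simeq\pi\oplus\bar\pi$. It treats all $k_j,l_j$ at once without Remark~\ref{rem:k=1}, and it shows clearly where $n=2d$ enters. The paper's version gives the bijection explicitly.

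In fact the two bijections agree. The elements $r_i^b$ and $r_i^{\gcd(b,d)}$ generate the same subgroup of order $L$ in $\Z_m^\times$, so $S_i\equiv\alpha^{(i)}(b)\pmod m$. The paper's congruence $\alpha^{(1)}(b)\equiv\alpha^{(2)}(b)$ then shows you could take $a'=a$ throughout.
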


\begin{proof}
The plan is to use Proposition~\ref{prop:almost-conjugate} with the bijection between $\Gamma_1$ and $\Gamma_2$ given by $A_1^aB_1^b\mapsto A_2^aB_2^b$ for any $a,b\in\Z$, where $A_i,B_i$ stand for the generators of $\Gamma_i$ satisfying \eqref{eq:TypeIrelations}, for $i=1,2$. 

Clearly 
$\Eig\big((\rho_{k_1,l_1}^{(i)}\oplus\dots\oplus \rho_{k_p,l_p}^{(i)})(\gamma)\big)
=\bigcup_{j=1}^p \Eig\big(\rho_{k_j,l_j}^{(i)}(\gamma)\big)$ for any $\gamma\in \Gamma_i$. 
Hence, it is sufficient to show that 
$
\Eig\big(\rho_{k_j,l_j}^{(1)}(A_1^aB_1^b)\big) 
= \Eig\big(\rho_{k_j,l_j}^{(2)}(A_2^aB_2^b)\big) 
$
for all $1\leq j\leq p$. 
Consequently, there is no loss of generality in assuming $p=1$, as well as $k_1=1$ by Remark~\ref{rem:k=1} and $l_1=1$ by \eqref{eq:irrep-equivalentes} because $n/d=2$. 

We need to show that 
$
\Eig\big(\rho_{1,1}^{(1)}(A_1^aB_1^b)\big) 
= \Eig\big(\rho_{1,1}^{(2)}(A_2^aB_2^b)\big) 
$
for all $a,b\in\Z$. 
The real representation $\rho_{1,1}^{(i)}$ of $\Gamma_i$ is of complex type, which means there is an irreducible complex representation $\pi_{1,1}^{(i)}$ of $\Gamma_i$ (explicitly defined in \cite[Thm.~5.5.6]{Wolf-book}) such that $\rho_{1,1}^{(i)}\simeq \pi_{1,1}^{(i)}\oplus \overline{\pi_{1,1}^{(i)}}$. 
Note that 
\begin{equation}\label{eq:Eig(rho_11^i(gamma))}
\Eig\big(\rho_{1,1}^{(i)}(\gamma)\big)
= \Eig\big(\pi_{1,1}^{(i)}(\gamma)\big)\cup \overline{\Eig\big(\pi_{1,1}^{(i)}(\gamma)\big)}
\qquad\forall\, \gamma\in \Gamma_i,
\end{equation}
where the bar means complex conjugation of every element in the multiset $\Eig\big(\pi_{1,1}^{(i)}(\gamma)\big)$.

In the sequel, we write $\xi_m=e^{2\pi\mi/m}$ for any $m\in\N$. 
Ikeda~\cite[(3.9)]{Ikeda83} determined explicitly the eigenvalues of $\pi_{1,1}^{(i)}(A_i^aB_i^b)$ as the roots of the polynomial
\begin{align*}
P_{a,b}^{(i)}(z)
:= \prod_{j=1}^{\gcd(d,b)} 
\left(
	z^{\frac{d}{\gcd(b,d)}}
	-\xi_m^{a\, \alpha^{(i)}(b) \, r_i^{j-1}}
	\xi_{n/d}^{\frac{b}{\gcd(d,b)}}
\right)\in\C[z]
,
\end{align*}
where 
$$
\alpha^{(i)}(c)=\sum_{h=1}^{\frac{d}{\gcd(d,c)}} r_i^{\gcd(d,c)h}
\qquad\text{for any $c\in\Z$}. 
$$ 
It follows that the eigenvalues of $\overline{\pi_{1,1}^{(i)}(A_i^aB_i^b)}$ are the roots of the polynomial 
$
\bar P_{a,b}^{(i)}(z):=\overline{P_{a,b}^{(i)}(\bar z)}\in\C[z],
$
whose $k$-th coefficient is the complex conjugate of the $k$-th coefficient of $P_{a,b}^{(i)}(z)$ for every $k$. 
Combining these facts with \eqref{eq:Eig(rho_11^i(gamma))}, the required assertion 
$
\Eig\big(\rho_{1,1}^{(1)}(A_1^aB_1^b)\big) 
= \Eig\big(\rho_{1,1}^{(2)}(A_2^aB_2^b)\big) 
$
is equivalent to 
\begin{equation*}
P_{a,b}^{(1)}(z)\bar P_{a,b}^{(1)}(z)
= 
P_{a,b}^{(2)}(z)\bar P_{a,b}^{(2)}(z)
.
\end{equation*}

Since $n = 2d$ and every prime divisor of $d$ divides $\frac{n}{d}=2$ by Lemma~\ref{lem:Burnside2}, it follows that $d$ is a power of $2$.

\setcounter{claim}{0}

\begin{claim}\label{claim:gcd}
$\gcd(r_1^{q}-1,m)=\gcd(r_2^{q}-1,m)$ for every positive divisor $q$ of $d$.
\end{claim}

\begin{proof}
	\renewcommand{\qedsymbol}{$\blacksquare$}
	The assertion is obvious for $q=1$ by the definition of a Type I group (i.e.\ $\gcd(r_i-1,m)=1$). 
	Let $q$ be a positive divisor of $d$ with $q\neq1$. 
	Note that $q$ is even because $d$ is a power of $2$.

	Since $r_1r_2\equiv -1\pmod m$, $r_1\equiv -r_2^{d-1}\pmod m$, thus $r_1^q\equiv r_2^{q(d-1)}\pmod m$.
	We conclude that
	\begin{align*}
		\gcd(r_2^q-1,m) &
		= \gcd(r_2^{q(d-1)}-1,m)
		\qquad
		\text{(by Lemma~\ref{lem:ordenes}\eqref{item:gcd(r^c-1,m)=gcd(r^j-1,m)})}
		\\ & 
		= \gcd(r_1^{q}-1,m),
	\end{align*}
	as claimed. 
\end{proof}

\begin{claim}\label{claim:alpha^i(b)}
$\alpha^{(1)}(c)\equiv \alpha^{(2)}(c)\pmod m$ for every $c\in\Z$. 
\end{claim}

\begin{proof}
\renewcommand{\qedsymbol}{$\blacksquare$}
If $\gcd(d,c)=1$, then $\alpha^{(i)}(c)\equiv 0\pmod m$ by Lemma~\ref{lem:ordenes}\eqref{item:gcd(j,d)=1=>gcd(r^j-1,m)=1} for any $i=1,2$. 

We suppose now that $\gcd(d,c)>1$. 
Note that $\gcd(d,c)$ is even because $d$ is a power of $2$. 

Similarly as in the proof of the previous claim, 
$r_1r_2\equiv -1\pmod m$ implies that
$r_1^{\gcd(d,c)}\equiv r_2^{\gcd(d,c)(d-1)}\pmod m$.
Hence
\begin{align*}
\alpha^{(2)}(c)
=\sum_{h=1}^{\frac{d}{\gcd(d,c)}} r_2^{\gcd(d,c)h}
\equiv \sum_{h=1}^{\frac{d}{\gcd(d,c)}} r_2^{\gcd(d,c)(d-1)h}
\equiv \sum_{h=1}^{\frac{d}{\gcd(d,c)}} r_1^{\gcd(d,c)h}
= \alpha^{(1)}(c)
\pmod m,
\end{align*}
as claimed. 
\end{proof}

Fix $a,b\in\Z$. 
Set $u=\gcd(r_1^{\gcd(d,b)}-1,m)=\gcd(r_2^{\gcd(d,b)}-1,m)$ (see Claim~\ref{claim:gcd}).
We have that
\begin{align*}
\left(r_i^{\gcd(d,b)}-1\right) \alpha^{(i)}(b)
&
= \left(r_i^{\gcd(d,b)}-1\right) \sum_{h=1}^{\frac{d}{\gcd(d,b)}} \big(r_i^{\gcd(d,b)}\big)^h
= r_i^{\gcd(d,b)}(r_i^{d}-1)&
\equiv 0\pmod m.
\end{align*}
We deduce that $\frac{m}{u}$ divides $\alpha^{(i)}(b)$, say $\alpha^{(i)}(b)=c_i\frac{m}{u}$ for some $c_i\in\Z$. 
Hence  
\begin{equation}\label{eq:P_ab}
\begin{aligned}
P_{a,b}^{(i)}(z) &
= \prod_{j=1}^{\gcd(d,b)} 
\left(
	z^{\frac{d}{\gcd(b,d)}}
	-\xi_m^{ac_i\, \frac{m}{u} \, r_i^{j-1}}
	\xi_{n/d}^{\frac{b}{\gcd(d,b)}}
\right)
= \prod_{j=1}^{\gcd(d,b)} 
\left(
	z^{\frac{d}{\gcd(b,d)}}
	-\xi_u^{ac_i\, r_i^{j-1}}
	\xi_{n/d}^{\frac{b}{\gcd(d,b)}}
\right)
,
\end{aligned}
\end{equation}
Claim~\ref{claim:alpha^i(b)} implies that $c_1\frac{m}{u}\equiv c_2\frac{m}{u} \pmod m$, thus $c_1\equiv c_2\pmod u$. 

Note that $\xi_{n/d}=-1$ because $n/d=2$. 
Since the term $\xi_{n/d}^{\frac{b}{\gcd(d,b)}}$ is real, we have that
\begin{equation}\label{eq:P_ab-conjugado}
	\begin{aligned}
\bar P_{a,b}^{(i)}(z) &
= \prod_{j=1}^{\gcd(d,b)} 
\left(
	z^{\frac{d}{\gcd(b,d)}}
	-\overline{
		\xi_u^{ac_i\, r_i^{j-1}}
		\xi_{n/d}^{\frac{b}{\gcd(d,b)}}
	}
\right)
= \prod_{j=1}^{\gcd(d,b)} 
\left(
	z^{\frac{d}{\gcd(b,d)}}
	-\xi_u^{-ac_i\, r_i^{j-1}}
	\xi_{n/d}^{\frac{b}{\gcd(d,b)}}
\right)
,
\end{aligned}
\end{equation}

Now, $r_1r_2\equiv -1\pmod m$ gives $r_1^{j-1}\equiv (-1)^{j-1}\, r_2^{d-(j-1)}\pmod m$ for all $1\leq j\leq d$. 
Moreover, 
$
r_1^{j-1}\equiv (-1)^{j-1}\, r_2^{\gcd(b,d)-(j-1)}\pmod u
$
for all $1\leq j\leq \gcd(d,b)$
because $u\mid m$ and $r_i^{\gcd(b,d)}\equiv 1\pmod u$. 
Combining this with $c_1\equiv c_2\pmod u$, we get 
\begin{align*}
ac_1\, r_1^{j-1}&\equiv 
(-1)^{j-1}\, ac_2\, r_2^{\gcd(b,d) -(j-1)}\pmod u
\qquad\forall \,1\leq j\leq \gcd(b,d). 
\end{align*}
We have thus shown, for any $1\leq j\leq \gcd(b,d)$, that
\begin{align*}
\xi_u^{\pm ac_1\, r_1^{j-1}}
= 
\begin{cases}
\xi_u^{\pm ac_2\, r_2^{j-1}}
	&\text{ if $j-1$ is even},\\
\xi_u^{\mp ac_2\, r_2^{j-1}}
&\text{ if $j-1$ is odd}.
\end{cases}
\end{align*}
It follows immediately from \eqref{eq:P_ab} and \eqref{eq:P_ab-conjugado} that 
$
P_{a,b}^{(1)}(z)\bar P_{a,b}^{(1)}(z)
= 
P_{a,b}^{(2)}(z)\bar P_{a,b}^{(2)}(z)
$, and the proof is complete.
\end{proof}

The next remark shows that $15$ is the smallest dimension of a pair of isospectral spherical space forms with non-isomorphic fundamental groups produced by  Theorem~\ref{thm:contraejemplos}.

\begin{remark}\label{rem:smallest-dim}
As mentioned in the proof of Theorem~\ref{thm:contraejemplos}, the hypothesis $n = 2d$ implies that both $d$ and $n$ are powers of $2$, as a consequence of Lemma~\ref{lem:Burnside2}.
The case $d=1$ is excluded because $\Gamma_d(m,n,d)$ is assumed to be non-cyclic (see Remark~\ref{rem:Gamma_d(m,n,r)}).
The case $d=2$ gives isomorphic groups (see Proposition~\ref{prop:d=2=>isomorphic} below). 
We next see that the choice $d=4$ also gives isomorphic groups. 

Under the assumption in Theorem~\ref{thm:contraejemplos}, assume that $d=4$, so $r_i^4\equiv1\pmod m$. 
From $r_1r_2\equiv -1\pmod m$, it follows that $r_1\equiv -r_2^3\pmod m$, so $\gcd(-r_2^3-1,m)=\gcd(r_1-1,m)=1$ (see Lemma~\ref{lem:Burnside1}). 
Since $-r_2$ is also relative prime to $m$, we have that 
$$
1=\gcd\big(-r_2(-r_2^3-1),m\big) = \gcd\big(r_2+1,m\big)
.
$$
Now, $r_2^4\equiv 1\pmod m$ implies $m\mid (r_2^4-1)=(r_2-1)(r_2+1)(r_2^2+1)$, deducing that $r_2^2\equiv -1\pmod m$. 
Hence 
\begin{align*}
r_1\equiv -r_2^3=(-r_2)r_2^2\equiv (-r_2)(-1)=r_2\pmod m,
\end{align*}
so $\Gamma_4(m,n,r_1)\simeq \Gamma_4(m,n,r_2)$ by Proposition~\ref{prop:TypeI-uptoisomorphism}. 

For $d=8$, we have that the Type~I groups
$\Gamma_8(85,16,2)$ and $\Gamma_8(85,16,42)$ satisfy the hypotheses in Theorem~\ref{thm:contraejemplos} and they are not isomorphic because $42 \not\equiv 2^c \pmod{85}$ for any $c \in \mathbb{Z}$.
Consequently, the isospectral pair
\begin{equation}\label{eq:smallest-example}
S^{15}/\rho_{1,1}^{(1)}\big(\Gamma_8(85,16,2)\big),
\quad
S^{15}/\rho_{1,1}^{(1)}\big(\Gamma_8(85,16,42)\big)
\end{equation}
has the smallest possible dimension among the isospectral pairs of spherical space forms with non-isomorphic fundamental groups produced by Theorem~\ref{thm:contraejemplos}.
Moreover, the computational results of the next subsection show that the pair~\eqref{eq:smallest-example} has the largest volume (equivalently, the smallest fundamental groups) among all isospectral spherical space forms with non-isomorphic fundamental groups arising from Theorem~\ref{thm:contraejemplos}.
\end{remark}

\subsection{Computational results}\label{subsec:ejemploscomputacionales}

We next explain the computational results which reveal that the construction in Theorem~\ref{thm:contraejemplos} is quite general.

\begin{table}
\caption{
Isospectral spherical space forms with non-cyclic, non-isomorphic fundamental groups. 
}
\label{table}
\small
$
\begin{array}[t]{ccccl}
N&m&n&d& [r_1,r_2]\\ \hline
1360 & 85 & 16 & 8 & [2, 42] \\
2720 & 85 & 32 & 16 & [3, 12] \\
3280 & 205 & 16 & 8 & [3, 68] \\
3536 & 221 & 16 & 8 & [8, 138] \\
5840 & 365 & 16 & 8 & [22, 168] \\
6800 & 425 & 16 & 8 & [32, 168] \\
7072 & 221 & 32 & 16 & [5, 44] \\
7120 & 445 & 16 & 8 & [12, 37] \\
7760 & 485 & 16 & 8 & [33, 227] \\
7888 & 493 & 16 & 8 & [70, 104] \\
8528 & 533 & 16 & 8 & [44, 96] \\
9040 & 565 & 16 & 8 & [18, 357] \\
10064 & 629 & 16 & 8 & [43, 117] \\
10960 & 685 & 16 & 8 & [127, 452] \\
11152 & 697 & 16 & 8 & [9, 196] \\
11152 & 697 & 16 & 8 & [38, 55] \\
13600 & 425 & 32 & 16 & [7, 82] \\
14416 & 901 & 16 & 8 & [76, 348] \\
15184 & 949 & 16 & 8 & [83, 229] \\
15440 & 965 & 16 & 8 & [43, 588] \\
15520 & 485 & 32 & 16 & [8, 202] \\
15776 & 493 & 32 & 16 & [12, 41] \\
16400 & 1025 & 16 & 8 & [68, 232] \\
16592 & 1037 & 16 & 8 & [111, 172] \\
17680 & 1105 & 16 & 8 & [8, 138]\\
17680 & 1105 & 16 & 8 & [83, 213] \\
18080 & 565 & 32 & 16 & [42, 48] \\
18512 & 1157 & 16 & 8 & [190, 749] \\
\end{array}
$
\qquad
$
\begin{array}[t]{ccccl}
N&m&n&d& [r_1,r_2] \\ \hline
18640 & 1165 & 16 & 8 & [12, 97] \\
19024 & 1189 & 16 & 8 & [191, 249] \\
19280 & 1205 & 16 & 8 & [8, 693] \\
19856 & 1241 & 16 & 8 & [100, 246]\\
19856 & 1241 & 16 & 8 & [302, 387] \\
20128 & 629 & 32 & 16 & [6, 80] \\
20176 & 1261 & 16 & 8 & [47, 629] \\
20560 & 1285 & 16 & 8 & [193, 253] \\
22304 & 697 & 32 & 16 & [3, 232]\\
22304 & 697 & 32 & 16 & [14, 44]\\
22304 & 697 & 32 & 16 & [73, 173] \\
22480 & 1405 & 16 & 8 & [192, 473] \\
23120 & 1445 & 16 & 8 & [423, 468] \\
23504 & 1469 & 16 & 8 & [18, 44] \\
24208 & 1513 & 16 & 8 & [144, 212]\\
24208 & 1513 & 16 & 8 & [166, 319] \\
24272 & 1517 & 16 & 8 & [68, 191] \\
25040 & 1565 & 16 & 8 & [188, 308] \\
26384 & 1649 & 16 & 8 & [47, 64]\\
26384 & 1649 & 16 & 8 & [172, 366] \\
26960 & 1685 & 16 & 8 & [252, 1122] \\
27472 & 1717 & 16 & 8 & [111, 818] \\
28240 & 1765 & 16 & 8 & [237, 943] \\
28496 & 1781 & 16 & 8 & [96, 421] \\
28832 & 901 & 32 & 16 & [23, 182] \\
29200 & 1825 & 16 & 8 & [168, 793] \\
29648 & 1853 & 16 & 8 & [76, 185] \\
\end{array}
$

\smallskip

For $m,n,d$ in some entry, $N=mn$ is the order of the group Type I group $\Gamma_{d}(m,n,r_i)$. \\
Each pair $[r_1,r_2]$ at the last column means that  $S^{2d-1}/\rho_{1,1}(\Gamma_{d}(m,n,r_1))$ and  $S^{2d-1}/\rho_{1,1}(\Gamma_{d}(m,n,r_2))$ are isospectral, and 
$\Gamma_{d}(m,n,r_1)\not\simeq \Gamma_{d}(m,n,r_2)$. 
\end{table}

We denote by $\mathcal P_{q,k}$ the space of homogeneous complex polynomials in $q+1$ variables of degree $k$, and by $\mathcal H_{q,k}$ its subspace given by harmonic elements. 
For any finite subgroup $G$ of $\Ot(q+1)$, we associate the generating function given by 
\begin{align*}
F_G(z):=
\sum_{k\geq0} \dim\mathcal H_{q,k}^G\, z^k.
\end{align*}

Ikeda noted (see \cite[Prop.~2.1]{Ikeda80_3-dimI}), for two spherical space forms $S^{q}/G_1$, $S^{q}/G_2$, that 
\begin{equation}\label{eq:encode}
\Spec(S^{q}/G_1)=\Spec(S^{q}/G_2)
\quad\Longleftrightarrow\quad 
F_{G_1}(z)=F_{G_2}(z)
.
\end{equation}
Also, for any finite subgroup $G$ of $\Ot(q+1)$, he obtained the expression (see \cite[Thm.~2.2]{Ikeda80_3-dimI})
\begin{equation}\label{eq:F_G(z)}
F_G(z)=\frac{1-z^2}{|G|} \sum_{g\in G}\frac{1}{\det(\Id_{q+1}-gz)},
\end{equation}
where $\det(\Id_{q+1}-gz)=\prod_{\lambda\in \Eig(g)} (1-\lambda z)$. 

\begin{remark}\label{rem:det(z-g)}
Setting $q=2d-1$, the term $\det(z\Id_{2d}-g)= \det(\Id_{2d}-gz)$ for $g=\rho_{1,1}(A^aB^b)$ is precisely the product of the polynomials $P_{a,b}(z)\bar P_{a,b}(z)$ defined inside the proof of Theorem~\ref{thm:contraejemplos}. 
\end{remark}

The generating function introduced above has been a fruitful tool for finding, or establishing the absence of, isospectral spherical space forms, and has been used, for instance, in \cite{Ikeda80_3-dimI, Ikeda80_3-dimII, Ikeda80_isosp-lens, Ikeda83} and \cite{Wolf-book} (see also \cite[\S3, Thm.~5.11, \S6]{LMR-SaoPaulo}).
It was also essential in the computational study of $p$-isospectral lens spaces carried out in \cite{Lauret-computationalstudy}.
The goal of this subsection is to adapt the codes from \cite{Lauret-computationalstudy}, developed for cyclic fundamental groups, to non-cyclic fundamental groups of Type~I.

To simplify the search, we will only consider spherical space forms of the form $$S^{2d-1}/ \rho_{1,1}\big(\Gamma_d(m,n,r))\big).$$
For $s\in\Z$, let $[s]_m$ denote the only integer satisfying $0\leq [s]_m<m$ and $[s]_m\equiv s\pmod m$. 

\begin{algorithm} \label{algorithm}
Searching for non-isomorphic, non-cyclic, fixed point free Type I groups 
$\Gamma_d(m,n,r_1)$ and $\Gamma_d(m,n,r_2)$ 
such that 
$S^{2d-1}/\rho_{1,1}(\Gamma_d(m,n,r_1))$ and $(S^{2d-1}/\rho_{1,1}(\Gamma_d(m,n,r_2))$ are isospectral.
\begin{enumerate}
\item Run $N$ over the positive integer numbers. 

\item Find all non-cyclic fixed point free Type I groups of order $N$ up to isomorphism, i.e.\ find all positive integers $m,n,d,r$ satisfying that
	\begin{itemize}
	\item $N=mn$,
	\item $\gcd((r-1)n,m)=1$, 
	\item $d\mid n$, $d\neq 1$, 
	\item the class of $r$ in $\Z_m^\times$ has order $d$, 
	\item $p\mid \frac{n}{d}$ for all prime divisor $p$ of $d$,
	\item $0\leq r\leq [r^c]_m$ for all $c\in\Z$ with $\gcd(c,d)=1$.
	\end{itemize}
The last condition ensures that the resulting Type I groups are not isomorphic by Theorem~\ref{prop:TypeI-uptoisomorphism}. 

\item \label{item:F_G(z)}
For each $\Gamma:=\Gamma_d(m,n,r)$ from the previous item, compute the generating function $F_{\rho_{1,1}(\Gamma)}(z)$ using \eqref{eq:F_G(z)}. 

\item Compare the values from the previous item, and obtain all the coincidences, which induce isospectralities by \eqref{eq:encode}.
\end{enumerate}
\end{algorithm}

All the computational examples produced by Algorithm~\ref{algorithm} for $N \le 30000$ are listed in Table~\ref{table}.
Notably, all pairs in Table~\ref{table} arise from the construction in Theorem~\ref{thm:contraejemplos}.

\begin{remark}
The step \eqref{item:F_G(z)} in Algorithm~\ref{algorithm} can be optimized replacing the ring of the complex power series $\C[[z]]$ by a finite field $\F$ of huge prime order (e.g.\ $10^{20}$). 
One has to replace $z$ a random element in $F$ and 
$\xi_m$ (resp.\ $\xi_n$) in the polynomials $P_{a,b}(z),\bar P_{a,b}(z)$ (see Remark~\ref{rem:det(z-g)}) by some 
$m$-th (resp.\ $n$-th) root of unity in $\F$. 
\end{remark}

\section{Open questions}

We conclude this article with some open questions.

\begin{question}\label{question-d=4,6}
What is the smallest possible dimension for a pair of isospectral spherical space forms with non-isomorphic fundamental groups?
\end{question}

The next table summarizes what is known about this question. 

\medskip

\begin{tabular}{ccl}
\begin{tabular}{c}
dim \\ $2d-1$
\end{tabular} & $d$ & 
\begin{tabular}{l}
Existence of isospectral spherical space forms \\
with non-cyclic fundamental groups.
\end{tabular}
\\ \hline
\rule{0pt}{13pt}%
3 & 2 & No. Isospectrality implies isometry  \cite[Thm.~I]{Ikeda80_3-dimI}.
\\
5 & 3 & No. Non-isometric isospectrality implies lens spaces  \cite[Thm.~3.9]{Ikeda80_3-dimII}.
\\
7 & 4 & \quad \textbf{Unknown}. 
\\
9 & 5 & No. Isospectrality implies isomorphic fundamental groups \cite[Thm.~3.1]{Ikeda80_3-dimII}.
\\
11 & 6 & \quad \textbf{Unknown}. 
\\
13 & 7 & No. Isospectrality implies isomorphic fundamental groups \cite[Thm.~3.1]{Ikeda80_3-dimII}.
\\
15 & 8 & Yes, see Remark~\ref{rem:smallest-dim}. 
\\
\end{tabular}

\medskip

Note that in dimensions 7 and 11 there are spherical space forms with fundamental groups of other types (see \cite[\S7.5]{Wolf-book}). 
One may concentrate in Type I groups, though it is not known either whether there are isospectral spherical space forms of dimension 7 or 11 with non-isomorphic fundamental groups of Type I. 
In dimension 7 (resp.\ 11), the possible Type I groups have $d=2,4$ (resp.\ $d=2,3,6$). 
The following result, combined with Theorem~\ref{thm:main1}, excludes the case $d=2$. 

\begin{proposition}\label{prop:d=2=>isomorphic}
Two Type I groups of the form $\Gamma_2(m,n,r_i)$ for $i=1,2$ are isomorphic. 
\end{proposition}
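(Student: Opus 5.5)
The plan is to show $r_1\equiv r_2 \pmod m$ and then invoke Proposition~\ref{prop:TypeI-uptoisomorphism} with $c=1$. Here $m$ and $n$ are taken to be the same for both groups, as the statement implicitly intends by writing $\Gamma_2(m,n,r_i)$.

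Since $d=2$, the class of $r_i$ in $\Z_m^\times$ has order exactly $2$. Hence $m\mid r_i^2-1=(r_i-1)(r_i+1)$. Lemma~\ref{lem:Burnside1} gives $\gcd(r_i-1,m)=1$, so $m\mid r_i+1$, that is, $r_i\equiv -1\pmod m$ for $i=1,2$. Therefore $r_1\equiv r_2\pmod m$, and Proposition~\ref{prop:TypeI-uptoisomorphism} yields $\Gamma_2(m,n,r_1)\simeq\Gamma_2(m,n,r_2)$.

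This can also be checked directly, without the proposition. Both groups have the same presentation $\langle A,B\mid A^m=B^n=1,\ BAB^{-1}=A^{-1}\rangle$, because the relation $BAB^{-1}=A^{r_i}$ only depends on $r_i$ modulo $m$ (as $A^m=1$). So the identity on generators defines the isomorphism.

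There is no real obstacle. The only point needing care is the coprimality step $\gcd(r_i-1,m)=1$, which is exactly what lets us cancel the factor $r_i-1$ and deduce $m\mid r_i+1$.
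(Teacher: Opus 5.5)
Your proof is correct and is the same argument as the paper's: $m\mid (r_i-1)(r_i+1)$ together with $\gcd(r_i-1,m)=1$ forces $r_i\equiv -1\pmod m$, and Proposition~\ref{prop:TypeI-uptoisomorphism} then finishes. Your direct check via identical presentations is also valid, since once $A^m=1$ the relation $BAB^{-1}=A^{r_i}$ depends only on $r_i$ modulo $m$.
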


\begin{proof}
We have that $r_i^2\equiv 1\pmod m$, so $m\mid (r_i-1)(r_i+1)$. 
However, $m$ is relatively prime to $r_i-1$ by Lemma~\ref{lem:Burnside1}, thus $m\mid r_i+1$. 
Hence $r_1\equiv -1\equiv r_2\pmod m$, and the assertion follows by Proposition~\ref{prop:TypeI-uptoisomorphism}. 
\end{proof}

\begin{question}\label{question-d-odd}
	Are there isospectral spherical space forms of dimension $\equiv 1\pmod 4$ with non-isomorphic fundamental groups? 
\end{question}

The fundamental group of a spherical space form of dimension $\equiv 1\pmod 4$ is necessarily of Type I. 
Therefore, Question~\ref{question-d-odd} is equivalent via Theorem~\ref{thm:main1} to ask whether there are non-isomorphic Type I groups of the form $\Gamma_i:= \Gamma_{d}(m,n,r_i)$ for $i=1,2$, with $d$ odd, such that $\Spec\big(S^{2q-1}/\rho^{(1)}(\Gamma_1)\big) = \Spec\big(S^{2q-1}/\rho^{(2)}(\Gamma_2)\big)$ for some $2q$-dimensional fixed point free representations $\rho^{(1)},\rho^{(2)}$ of $\Gamma_1,\Gamma_2$ respectively. 

We mentioned in Subsection~\ref{subsec:ejemploscomputacionales} that all computational examples in Table~\ref{table} can be constructed by Theorem~\ref{thm:contraejemplos}.

\begin{question}
Does Theorem~\ref{thm:contraejemplos} explain all pairs of isospectral spherical space forms with non-isomorphic fundamental groups?
\end{question}

The following question was asked in \cite[Question 5.13]{LMR-survey}, and is related to Remark~\ref{rem:stronglyisospectral}. 

\begin{question}
Are there isospectral spherical space forms with non-cyclic fundamental groups that are not strongly isospectral? 
\end{question}

\bibliographystyle{plain}

\end{document}